\newcommand{\badrounding}[1]{}
\newcommand{\cC}{\mathcal{C}}
\newtheorem{theo}{Theorem}
\newtheorem{prop}[theo]{Proposition}
\newtheorem{lemma}[theo]{Lemma}
\newtheorem{conj}[theo]{Conjecture}
\theoremstyle{definition}
\numberwithin{theo}{section}
\tikzset{
vtx/.style={inner sep=1.1pt, outer sep=0pt, circle, fill,draw}, 
vtxl/.style={inner sep=1.1pt, outer sep=0pt, rectangle, fill=yellow,draw=black}, 
hyperedge/.style={fill=blue,opacity=0.3,draw=black}, 
}
\begin{document}

\author[1]{Maria Axenovich \footnote{\texttt{maria.aksenovich@kit.edu} Research is partially supported by DFG grant FKZ AX 93/2-1}}
\author
[2]{J\'ozsef Balogh \footnote{\texttt{jobal@illinois.edu}. Research is partially supported by NSF grant DMS-1764123, NSF RTG grant DMS 1937241, Arnold O. Beckman Research Award (UIUC Campus Research Board RB 22000), the Langan Scholar Fund (UIUC).}}
\author[1]{Felix Christian Clemen \footnote{\texttt{felix.clemen@kit.edu}}}
\author[1]{Lea Weber \footnote{\texttt{lea.weber@kit.edu}}}
\affil[1]{Karlsruhe Institute of Technology, 76133 Karlsruhe, Germany}
\affil[2]{University of Illinois at Urbana-Champaign, Urbana, Illinois 61801, USA}

\title{Unavoidable order-size pairs in hypergraphs --  positive forcing density}

\maketitle 
\begin{abstract}
  Erd\H{o}s, F\"uredi, Rothschild and S\'os initiated  a study of  classes of graphs that forbid every induced subgraph on a given number $m$ of vertices and number $f$ of edges.  
    Extending their notation to $r$-graphs, we write $(n,e) \to_r (m,f)$ if every $r$-graph $G$ on $n$ vertices with $e$ edges has an induced subgraph on $m$ vertices and $f$ edges.
      The \emph{forcing density} of a pair $(m,f)$ is $$ \sigma_r(m,f) =\left. \limsup\limits_{n \to \infty}  \frac{|\{e : (n,e) \to_r (m,f)\}|}{\binom{n}{r}} \right. .$$
  In the graph setting it is known that there are infinitely many pairs $(m, f)$ with positive forcing density. Weber asked if there is a pair of positive forcing density for $r\geq 3$ apart from the trivial ones $(m, 0)$ and $(m, \binom{m}{r})$.
  Answering her question, we show that $(6,10)$ is such a pair for $r=3$ and conjecture that it is the unique such pair. Further, we find necessary conditions for a pair to have positive forcing density, supporting this conjecture. 
\end{abstract}

\section{Introduction}
  The \emph{Tur\'an function} $\textup{ex}(n,H)$ is the maximum number of edges in an $H$-free $n$-vertex $r$-graph. The \emph{Tur\'an density} of $H$, denoted by $\pi(H)$, is defined as follows
 \[
 \pi(H)=\lim_{n\to \infty} \frac{\textup{ex}(n,H)}{ \binom{n}{r}}.
 \]
Determining the Tur\'an function for graphs and hypergraphs is a central topic in extremal graph theory with many challenging open problems, trying to identify what graph density forces the occurrence of a specific subgraph. Here, we are concerned with conditions on the graph density that forces the occurrence of an induced subgraph on a given number of vertices  and a given number of edges, i.e., a given order-size pair.   Erd\H{o}s, F\"uredi, Rothschild and S\'os~\cite{EFRS} studied the class of graphs that does not contain a vertex subset of a given size $m$ that spans exactly $f$ edges. Given pairs of non-negative integers $(n,e)$ and $(m,f)$ we write $$(n,e)\rightarrow_r(m,f)$$ if every $r$-graph $G$ on $n$ vertices and with $e$ edges contains a vertex subset of a given size $m$ that spans exactly $f$ edges. The \emph{forcing density} of a pair $(m,f)$ is
\begin{align*}
    \sigma_r(m,f)=\limsup_{n \to \infty} \frac{|\{ e: (n,e) \rightarrow_r (m,f)\}|}{\binom{n}{r}}.
\end{align*}

Erd\H{o}s, F\"uredi, Rothschild and S\'os   \cite{EFRS} studied $\sigma_2(m,f)$ for different choices of $(m,f)$. They showed that 
if $(m,f) \in \{(2,0), (2,1), (4,3), (5,4), (5,6)\}$, then $\sigma_2(m,f) =1 $; otherwise, $\sigma_2(m,f) \le \frac23$. They also gave a construction that shows that for  most pairs $(m,f)$ we have $\sigma_2(m,f) = 0$.
The upper bound $\frac 23$  was subsequently improved by He, Ma, and Zhao~\cite{HMZ} to $ \frac12$. On the other hand, Erd\H{o}s, F\"uredi, Rothschild and S\'os   \cite{EFRS} showed that there are infinitely many pairs of positive forcing density, in particular there are infinitely many pairs $(m,f)$ with $\sigma_2(m,f) \ge  \frac 18$.  He, Ma, and Zhao~\cite{HMZ} improved this result, by showing that there are infintely many pairs $(m,f)$ with $\sigma_2(m,f) \ge  \frac 12$.
Considering the hypergraph setting, Weber \cite{W22} showed that for any $r, m \in \mathbb N$, $r, m \ge 3$, all but at most $m^{\frac{r}{r-1}}$ of all possible $\binom mr$ pairs $(m,f)$ satisfy $\sigma_r(m,f) = 0$.\\

Axenovich and Weber \cite{AW} asked whether there are pairs $(m,f)$ for which not only $\sigma_r(m,f)=0$, but a stronger statement holds.
A pair $(m,f)$ is {\it absolutely r-avoidable} if there is $n_0$ such that for each $n>n_0$ and for every $e\in \{0, \ldots, \binom{n}{r}\}$, $(n,e) \not\to_r  (m,f)$. 
In \cite{AW} it was shown that for $r=2$ there are infinitely many absolutely avoidable pairs. Moreover, there is an infinite family of absolutely avoidable pairs of the form $(m, \binom m2/2)$ and  for every sufficiently large $m$, there exists an $f$ such that $(m,f)$ is absolutely avoidable. In \cite{W22} this result was extended to higher uniformities to show that for every
 $r \ge 3$, there exists $m_0$ such that for every $m \ge m_0$ either $(m, \lfloor\binom mr/2\rfloor)$ or $(m, \lfloor{\binom mr/2}\rfloor -m-1)$ is absolutely avoidable. \\

While there are many pairs $(m,f)$ for which $\sigma_r(m,f)=0$,  not a single (non-trivial) pair with positive forcing density was known for $r$-graphs when $r\geq 3$. We denote by $K_t^{r}$ the $r$-graph on $t$ vertices where every $r$-set is an edge. Note that $\sigma_r(r, 1) = \sigma_r(r,0) = 1$ and for $f = 0$,  $\sigma_r$ corresponds to the Tur\'an density, i.e., 
$ \sigma_r(m,0) = \sigma_r(m,\tbinom mr) = \pi(K_{m}^{r}) $,  where the best currently known general bounds on the Tur\'an density are 
$$ 1- \left(\frac{r-1}{m-1}\right)^{r-1}\le \pi(K_{m}^{r}) \le 1 - \binom{m-1}{r-1}^{-1},$$
due to Sidorenko~\cite{S81} and de Caen~\cite{dC}. 
Weber \cite{W22}  asked whether for $m>r\geq 3$, there is any $f$ with $0<f<\binom{n}{r}$ such that $\sigma_r(m,f)>0$ and suggested the pair $(6,10)$ as a candidate. We answer this question in the affirmative and prove $\sigma_3(6,10)>0$. \\

Given families of $r$-graphs $\mathcal{F},\mathcal{G}$, we denote by $\textup{ex}(n,{}_{\mathrm{ind}}\mathcal{F},\mathcal{G})$ the maximum number of edges in an $n$-vertex $r$-graph not containing any $F\in\mathcal{F}$ as an induced copy and also not any $G\in\mathcal{G}$ as a copy. Further, denote by $\pi({}_{\mathrm{ind}}\mathcal{F},\mathcal{G})$ the limit
 \[
 \pi({}_{\mathrm{ind}}\mathcal{F},\mathcal{G})=\limsup_{n\to \infty} \frac{\textup{ex}(n,{}_{\mathrm{ind}}\mathcal{F},\mathcal{G})}{ \binom{n}{r}}.
 \]  
We mostly consider $3$-graphs in this paper. When clear from context, we shall write $abc$ for the set $\{a,b,c\}$ corresponding to an edge in a $3$-graph. Denote by $[n]=\{1,2,\ldots,n\}$ the set of the first $n$ integers. The $3$-graph on vertex set $[4]$ with edgeset $\{123,124,124\}$ is denoted by $K_{4}^{3-}$. Let $\mathcal{F}_6^{10}$ be the family of $6$-vertex $3$-graphs containing exactly $10$ edges.

\begin{theo}
\label{610theo}
We have that  $\sigma_3(6,10)= 1-2\pi({}_{\mathrm{ind}}\mathcal{F}_6^{10},\{K_4^{3-}\})$. Moreover,  $0.42622\leq \sigma_3(6,10)\leq  0.47106$.
%
\end{theo}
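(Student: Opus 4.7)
The plan is to establish the formula $\sigma_3(6,10) = 1 - 2\pi({}_{\mathrm{ind}}\mathcal{F}_6^{10}, \{K_4^{3-}\})$ via a density dichotomy for induced-$\mathcal{F}_6^{10}$-free $3$-graphs, and then to pin down the numerical bounds through a flag-algebra computation (for the upper bound on $\pi := \pi({}_{\mathrm{ind}}\mathcal{F}_6^{10}, \{K_4^{3-}\})$) together with an explicit construction (for the matching lower bound on $\pi$).

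The key technical step is the following supersaturation lemma: there exist positive constants $\varepsilon, n_0$ so that if $n \ge n_0$ and $G$ is an $n$-vertex $3$-graph containing at least $\varepsilon n^4$ copies of $K_4^{3-}$ and at least $\varepsilon n^4$ ``co-$K_4^{3-}$'s'' (that is, $4$-sets spanning at most one edge), then $G$ contains an induced $6$-vertex subset with exactly $10$ edges. I would prove this by a double-averaging argument: extract a disjoint pair $(A,B)$ of such $4$-sets (with $A$ spanning $3$ edges and $B$ spanning at most $1$), chosen together with two further vertices $u,v$ whose link-structures into $A \cup B$ are generic. One then examines the family of $6$-subsets obtained by swapping vertices of $A$ and $B$ inside $A \cup \{u,v\}$: each single swap changes the edge count by a difference of two link-degrees in a $5$-set, a value in $\{-10,\dots,10\}$, and the plentiful supply of configurations guarantees that some consecutive pair in this family has edge counts straddling $10$ with difference at most one, yielding an induced member of $\mathcal{F}_6^{10}$.

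From this supersaturation lemma the density dichotomy is immediate via Erd\H{o}s--Simonovits supersaturation: if $G$ is induced-$\mathcal{F}_6^{10}$-free with density $\alpha \in (\pi + \varepsilon, 1 - \pi - \varepsilon)$, then $G$ must contain $\Omega(n^4)$ copies of $K_4^{3-}$ (else an ``almost $K_4^{3-}$-free'' stability argument forces $\alpha \le \pi + o(1)$), and symmetrically $\bar{G}$ has $\Omega(n^4)$ copies of $K_4^{3-}$, contradicting the supersaturation lemma. Hence the bad edge counts lie in $[0, (\pi + o(1))\binom{n}{3}] \cup [(1-\pi-o(1))\binom{n}{3}, \binom{n}{3}]$, giving $\sigma_3(6,10) \ge 1 - 2\pi$. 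For the matching reverse inequality, I would realize every density in $[0,\pi] \cup [1-\pi,1]$ via constructions: disjoint unions of small extremal examples for the joint Tur\'an problem padded by isolated vertices (using that a $K_4^{3-}$-free $3$-graph has every $5$-subset spanning at most $5$ edges, so an isolated vertex can never complete a $6$-subset of exactly $10$ edges) together with complementation.

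The numerical bounds reduce to $\pi \in [0.26447, 0.28689]$. For $\pi \le 0.28689$ (giving $\sigma_3(6,10) \ge 0.42622$) I would run a flag-algebra semidefinite program on the joint Tur\'an problem --- equivalently, on $3$-graphs where every $4$-set has at most $2$ edges and every $6$-set has at most $9$ edges. For $\pi \ge 0.26447$ (giving $\sigma_3(6,10) \le 0.47106$) I would exhibit an explicit construction, plausibly an iterated blow-up of a carefully chosen small $K_4^{3-}$-free core, designed to avoid those specific $6$-vertex configurations of exactly $10$ edges. The main obstacle is the supersaturation lemma itself: a single-vertex swap in a $6$-subset can change the edge count by as much as $\binom{5}{2} = 10$, exactly large enough to skip over the target value, so no naive discrete intermediate value argument suffices; the proof must exploit the positive densities of $K_4^{3-}$'s and co-$K_4^{3-}$'s through a careful averaging and configuration-counting step to produce the required fine-grained sliding sequence.
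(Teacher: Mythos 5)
The core of your proposal hinges on a ``supersaturation lemma'' asserting that $\Omega(n^4)$ copies of $K_4^{3-}$ together with $\Omega(n^4)$ co-$K_4^{3-}$'s force an induced $(6,10)$-configuration, and you then try to prove it by swapping vertices between a $K_4^{3-}$ and a co-$K_4^{3-}$. You yourself flag the fatal obstacle: a single vertex-swap in a $6$-set can change the edge count by up to $\binom 52 = 10$, exactly the gap width, so a discrete intermediate-value argument cannot hit $10$ exactly. Saying that ``plentiful configurations'' and ``careful averaging'' will produce a fine-grained sliding sequence is not a proof --- this is precisely the hard part of the theorem and the proposal leaves it unresolved. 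As stated, your lemma is unproven and the sketch you give does not close the gap.

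The paper's route around exactly this obstacle is structural rather than probabilistic. It first invokes a Ramsey argument (via $r_3(t,t)$) to partition almost all vertices into many homogeneous sets of a fixed large size; a clique/independent-set pair is ruled out (Lemma~\ref{emptycomplete}, via Lemma~\ref{ramseystuff} which refines the two sides to make $E(A',A',B')$ and $E(A',B',B')$ each empty or complete), so WLOG one gets a partition into large independent sets. Then Erd\H os--Simonovits supersaturation (Lemma~\ref{supersat}) gives $\Omega(n^4)$ copies of $K_4^{3-}$, pigeonhole concentrates many transversal copies between four parts, Erd\H os's theorem that $\pi(K_4^4(t))=0$ (Lemma~\ref{findingparindK43}) extracts an induced weak $K_4^{3}(t)$ or weak $K_4^{3-}(t)$, and Lemma~\ref{almostdone} exhibits an explicit $(6,10)$-configuration inside such a weak blow-up. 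This last step is what replaces your missing averaging: the weak blow-up gives enough control that a specific $6$-set with exactly $10$ edges can be written down, rather than ``slid into.'' Your proposal lacks any analogue of the Ramsey/homogenization step and the explicit blow-up analysis, which is where the real work lies.

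The remaining parts of your proposal line up with the paper. Your observation that a $K_4^{3-}$-free, $(6,10)$-free $3$-graph has every $6$-set spanning at most $9$ edges (hence is closed under edge-deletion within that class), plus complementation, is exactly how the paper converts $\textup{ex}(n,{}_{\mathrm{ind}}\mathcal{F}_6^{10},\{K_4^{3-}\})$ into the equality $\sigma_3(6,10) = 1-2\pi$; the numerics $\pi \in [0.26447, 0.28689]$ come, as you suggest, from the Frankl--F\"uredi-style iterated blow-up construction on one side and Vaughan's flag-algebra bound on $\pi(K_4^{3-})$ (not a fresh SDP on the joint problem, which the paper only mentions as a further improvement) on the other. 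But with the central supersaturation claim unproven, the proposal does not constitute a proof of the theorem.
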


We do not know whether other pairs $(m,f)$ with $m>3$, $0 < f < \binom m3$  exist, such that $\sigma_3(m,f)>0$. It seems plausible that for $r=3$ there are indeed no other pairs with positive forcing density.  

\begin{conj}
\label{conj610}
Let $m$ and $f$ be positive integers, $0<f<\binom{m}{3}$. If $\sigma_3(m,f)>0$, then $(m,f)=(6,10)$.
 \end{conj}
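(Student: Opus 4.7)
The plan is to attack the conjecture by showing that for every pair $(m,f)\neq(6,10)$ with $0<f<\binom{m}{3}$, there exist $n$-vertex $3$-graphs of asymptotically arbitrary density that avoid every induced $m$-set with exactly $f$ edges, forcing $\sigma_3(m,f)=0$. The complementation symmetry $\sigma_3(m,f)=\sigma_3(m,\binom{m}{3}-f)$ reduces the problem to $0<f\leq\lfloor\binom{m}{3}/2\rfloor$, and Weber's absolute-avoidability results from~\cite{W22} cut this down further to a narrow band of $f$-values near $\binom{m}{3}/2$ plus a finite list of small-$m$ pairs.

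The main tool would be an \emph{avoidance catalogue} of constructions indexed by target density $p\in(0,1)$. The catalogue contains: (i) iterated blow-ups of small fixed $3$-graphs $H$, whose induced $m$-sets take only a discrete set of edge counts determined by how the $m$ vertices distribute among the parts of $H$, generically missing any prescribed $f$; (ii) quasirandom $3$-graphs of density $p$, whose induced $m$-set edge counts concentrate near $p\binom{m}{3}$ and which can be locally perturbed by a $o(n^{3})$-size edge modification to avoid any single value $f$; and (iii) hybrid $3$-partite or design-based constructions to fill in ``gap'' densities where (i) and (ii) simultaneously fail. The crux is to show that for each $(m,f)\neq(6,10)$, the union of densities covered by the catalogue has full measure in $(0,1)$, whereas for $(6,10)$ a density-robust obstruction remains.

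The principal obstacle is identifying what makes $(6,10)$ algebraically exceptional so that the catalogue provably fails only for this pair. The identity $\sigma_3(6,10)=1-2\pi({}_{\mathrm{ind}}\mathcal{F}_6^{10},\{K_4^{3-}\})$ from Theorem~\ref{610theo} strongly suggests that the exceptional status of $(6,10)$ reflects the closure of $\mathcal{F}_6^{10}$ under complementation together with the role of $K_4^{3-}$ as a minimal forcing substructure. A natural necessary condition to formulate and prove is the following: $\sigma_3(m,f)>0$ only if there exists a finite family $\mathcal{G}$ of $3$-graphs with $\pi(\mathcal{G})<1$ such that every $3$-graph avoiding all induced $6$-vertex spans in $\mathcal{F}_m^f$ must contain some $G\in\mathcal{G}$ as a (non-induced) copy. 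Verifying that $(6,10)$ is the unique pair admitting such a family---and making the necessary condition quantitative enough to exclude every competitor---will be the hardest step. For the finitely many small-$m$ pairs I would expect to need a flag-algebraic classification or computer-assisted search in the spirit of the numerical bounds in Theorem~\ref{610theo}, while for the large-$m$ tail the probabilistic constructions in (ii) combined with Weber's framework should suffice.
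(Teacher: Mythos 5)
The statement you are trying to prove is Conjecture~\ref{conj610}, which the paper explicitly leaves \emph{open}: the authors do not prove it, and instead give Theorem~\ref{Diophantine} as partial evidence together with a computer search for $m\le 10^6$. So there is no ``paper proof'' to match against; what I can usefully do is compare your plan to the paper's partial progress and flag where your plan has gaps.

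Your proposal is a research outline, not a proof, and you acknowledge as much. The paper's actual mechanism for ruling out densities is quite different from your catalogue. The authors work with \emph{canonical plus} and \emph{canonical minus} $3$-graphs: a clique $A$ of size $k$, an independent (base) set $B$, a fixed pattern $S\subseteq[2]$ of which mixed-triple types $E_i$ are present, and an $m$-sparse perturbation on $B$ (Lemma~\ref{lem:adding_clique}). Varying $k$ sweeps over a fine net of edge counts at every density in $(c,1-c)$, while the $m$-sparse perturbation fills in the gaps of order $O(n^{2+\epsilon})$. Since these classes are hereditary, every induced $m$-set of such a graph has edge count in an explicit union of short intervals around $f(S,m,x)$ (Lemma~\ref{F}). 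Running this for $S=\emptyset$, $S=\{2\}$, and for the complement yields the Diophantine system
\begin{equation*}
\binom{x_1}{3} = \binom{m}{3}-\binom{x_2}{3} = \binom{x_3}{3}+\binom{x_3}{2}(m-x_3),
\end{equation*}
as a necessary condition for $\sigma_3(m,f)>0$. Your catalogue items (i)--(iii) do not include these clique-plus-independent-set graphs, and yet they are precisely what makes the density range sweep work; iterated blow-ups alone miss most densities, and item (iii) is a placeholder.

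There is a genuine gap in item (ii). Quasirandom $3$-graphs of density $p$ do not avoid any single value $f$ close to $p\binom{m}{3}$: concentration of the induced $m$-set edge count around $p\binom{m}{3}$ means exactly that essentially every integer in a window around $p\binom{m}{3}$ is realized by many induced $m$-sets, and a local $o(n^3)$-edge perturbation cannot destroy all of them (there are $\Theta(n^m)$ witnesses, and any perturbation that changes $o(n^3)$ edges affects only $o(n^m)$ of the $m$-sets while potentially creating new witnesses). This is why the EFRS-style constructions deliberately have a macroscopic structured part rather than being pseudorandom. Item (ii) as written would not produce $(m,f)$-free graphs at the densities where the other items fail.

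Finally, the necessary condition you posit (existence of a finite family $\mathcal{G}$ with $\pi(\mathcal{G})<1$ forced by $\mathcal{F}_m^f$-freeness) is more abstract than what the paper proves and is not obviously checkable; the paper's condition is a concrete Diophantine system amenable to computer search, but even the paper stops short of proving the conjecture because nonexistence of further solutions to that system is itself unresolved. Your proposal does not, and could not at this level of detail, close that remaining gap.
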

 The following result provides evidence for this conjecture to be true. 

 \begin{theo}\label{Diophantine}
Let $m$ and $f$ be positive integers,  $0 < f < \binom m3$. If $\sigma_3(m,f) > 0$, then 
there exist $x_1, x_2, x_3 \in [m-1]$ such that
\begin{align}
\label{dioeq} f = \binom{x_1}3 = \binom m3 - \binom{x_2}3 = \binom{x_3}3 + \binom{x_3}2(m-x_3).
\end{align}
\end{theo}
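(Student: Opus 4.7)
The plan is to establish each of the three equalities by associating it with a family of $3$-graph constructions whose induced $m$-subsets have edge counts lying in a specific arithmetic set. Arguing by contraposition, if $f$ fails the $i$-th equality, then the $i$-th family provides, for a set of edge counts of density tending to $1$ in $\{0, 1, \ldots, \binom{n}{3}\}$, an $n$-vertex $3$-graph whose induced $m$-subsets never have exactly $f$ edges; this would contradict $\sigma_3(m, f) > 0$.

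The three families I would use are: (a) for the equality $f = \binom{x_1}{3}$, the clique construction $G^{(1)}_s$ on $[n]$ whose edge set consists of all triples within $[s]$, so that every induced $m$-subset $S$ has exactly $\binom{y}{3}$ edges where $y = |S \cap [s]|$; (b) for $f = \binom{m}{3} - \binom{x_2}{3}$, the complement $G^{(2)}_s = \overline{G^{(1)}_s}$, whose induced $m$-subsets have $\binom{m}{3} - \binom{y}{3}$ edges; and (c) for $f = \binom{x_3}{3} + \binom{x_3}{2}(m-x_3)$, the ``majority'' construction $G^{(3)}_a$ with vertex partition $[n] = A \sqcup B$, $|A| = a$, whose edges are the triples with at least two endpoints in $A$ and whose induced $m$-subsets carry $\binom{y}{3} + \binom{y}{2}(m-y)$ edges for $y = |S \cap A|$. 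Varying the parameter ($s$ or $a$) makes the total edge density range through all of $(0, 1)$, while in each family the set of induced-$m$-subset edge counts lies in a finite arithmetic set depending only on $m$. Hence if $f$ is not in the appropriate native set, no induced $m$-subset of any member of the family has $f$ edges.

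The crux of the proof is a \emph{gap-filling} step: each family contributes only $\Theta(n)$ native edge counts out of $\Theta(n^3)$ possible ones, so each construction must be augmented by sub-dominant perturbations (for instance, a bounded number of ``hub'' vertices whose incidences can be tuned arbitrarily, or auxiliary small cliques disjoint from the main structure) so as to realize every $e$ in a set of density tending to $1$, while guaranteeing that these perturbations never create an induced $m$-subset with exactly $f$ edges. The main obstacle is to design perturbations that are simultaneously rich enough to tune the total edge count to any prescribed value and sparse enough that their interaction with the native $m$-subsets is tightly controllable — a balance made delicate by the fact that any added or removed edge can shift the edge count of many $m$-subsets at once. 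Once this gap-filling is executed for all three families, collecting the resulting ``avoidable'' $e$-values contradicts $\sigma_3(m, f) > 0$ and establishes all three equalities simultaneously.
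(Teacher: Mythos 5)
Your three base constructions are the right ones and match the paper's: your $G^{(1)}_s$ is the paper's $G(\emptyset,n,s)$, your $G^{(2)}_s$ is $G([2],n,n-s)$, and your $G^{(3)}_a$ is $G(\{2\},n,a)$. You also correctly identify the gap-filling step as the crux. But the proposal leaves a genuine hole there, and a second, more structural hole in how the final equalities get pinned down.

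On the gap-filling: the gap between consecutive native edge counts $f(S,n,k)$ and $f(S,n,k+1)$ is $\Theta(n^2)$, so the perturbation must supply $\Theta(n^2)$ tunable edges while keeping its footprint on any $m$-subset bounded. Neither of the mechanisms you float achieves both. A ``hub'' vertex with $\Theta(n^2)$ edges through it is exactly the situation that cannot be controlled: some $m$-subsets containing the hub will pick up a large, unbounded number of its edges, so the induced $m$-subset counts leak far outside the native set. Auxiliary small (bounded-size) cliques only contribute $O(n)$ total edges and cannot span the $\Theta(n^2)$ gap; if you enlarge them, they themselves contain $m$-subsets with the wrong counts. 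The paper resolves this with a specific object you do not mention: an $m$-sparse $3$-graph (every $m$ vertices span at most $m$ edges) placed on the base independent set. By a probabilistic argument (Lemma~\ref{lem:probabilistic}) such graphs exist with $\Omega(n^{2+1/(m+1)})$ edges, which is superquadratic and hence enough to bridge the gap, yet by $m$-sparseness the perturbation shifts the edge count of any induced $m$-subset by at most $m$. That is the missing idea.

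On the second hole: once you perturb, each family's ``native set'' is no longer the discrete set $\{\binom y3\}$ (etc.) but a union of intervals of length $m$ around those values, e.g.\ $\bigcup_x [\binom x3,\binom x3+m]$ for the clique-plus-sparse family. Avoiding only one family therefore does not force $f=\binom{x_1}3$ exactly. The paper gets exactness by using \emph{both} a ``plus'' version (add an $m$-sparse graph on the independent part) and a ``minus'' version (remove an $m$-sparse graph from the clique part, via complementation), so the reachable counts live in $\bigcup_x[\binom x3,\binom x3+m]\cap\bigcup_x[\binom x3-m,\binom x3]$; this intersection collapses to the single points $\binom x3$ once $x$ is large enough that $\binom{x+1}3-m>\binom x3+m$, and the small-$x$ regime is handled separately (via $S=\{1\}$, Lemma~\ref{cor:small_f}). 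Your proposal collects avoidable $e$-values from the three families but does not perform this two-sided intersection, so even if the gap-filling were fixed, it would only yield $f$ within $m$ of each target value, not the exact Diophantine equations claimed.
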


Thus, in particular if there are no other non-trivial solutions except for $m=6$, $x_1= 5, x_2= 5, $  $x_3=3$, to the above Diophantine equation, then Conjecture~\ref{conj610} is true.  A computer search for suitable solutions of \eqref{dioeq} did not give a result for $m\leq 10^6$.

This paper is organized as follows: In Section~\ref{sec:Thm1} we prove Theorem~\ref{610theo}. In Section~ \ref{sec:Thm2} we prove Theorem~\ref{Diophantine}. Finally, in Section~\ref{sec:conclu} we make concluding remarks and state open problems.

\section{Proof of Theorem~\ref{610theo}}
\label{sec:Thm1}

We say a $3$-graph $G$ \emph{induces} $(6,10)$ if $G$ contains an induced copy of some $F\in \mathcal{F}_6^{10}$. If $G$ does not contain any $F\in \mathcal{F}_6^{10}$ as an induced copy, we say $G$ is $(6,10)$-free, i.e., a $3$-graph is $(6,10)$-free if no $6$-vertex set induces exactly $10$ edges.

\subsection{Proof idea}
Before proving Theorem~\ref{610theo}, we give a short sketch of the proof. 
We shall show that for every $\epsilon>0$  there is $n_0$ such that for every $n>n_0$ 
if $G$ is an $n$-vertex $3$-graph satisfying
\begin{align}
\label{edge density}
    \frac{e(G)}{\binom{n}{3}}\in \left[\pi({}_{\mathrm{ind}}\mathcal{F}_6^{10},\{K_4^{3-}\})+\varepsilon , 1-\pi({}_{\mathrm{ind}}\mathcal{F}_6^{10},\{K_4^{3-}\})-\varepsilon  \right],
\end{align}
then $G$ induces $(6,10)$.
Then we first use a standard Ramsey type argument to partition most of the vertices of $G$ into many large homogeneous sets. First, we rule out the case that there is a large clique and a large independent set that are disjoint. Thus, most of the vertex set of $G$ or its complement $G^c$ can be partitioned into large independent sets. Due to the symmetry of the problem, if we find a $(6,10)$-set in $G^c$, we also find a $(6,10)$-set in $G$. Thus, without loss of generality, we can assume that most of the vertices of $G$ can be partitioned into many large independent sets. Using a classical supersaturation result and the density assumption on $G$, we find many copies of $K_4^{3-}$ in $G$ and thus, in particular, four large independent sets spanning many transversal copies of $K_4^{3-}$. Using a final cleaning argument, we find a $(6,10)$-set in this substructure.  

On the other hand, we fix an arbitrary $3$-graph $G$ on ${\mathrm{ex}}(n, {}_\mathrm{ind}\mathcal{F}_6^{10},\{K_4^{3-}\})$ 
edges that is $(6,10)$-free and $K_4^{3-}$-free. Then every set of $6$ vertices spans at most $9$ edges, so there is a graph on $n$ vertices and $e$ edges, for any  $e\leq {\mathrm{ex}}(n, {}_{\mathrm{ind}}\mathcal{F}_6^{10},\{K_4^{3-}\})$, that is $(6,10)$-free. By taking complements, there also is a graph on $n$ vertices and $e$ edges for every $e \geq \binom{n}{3} -{\mathrm{ex}}(n, {}_{\mathrm{ind}}\mathcal{F}_6^{10},\{K_4^{3-}\})$, that is $(6,10)$-free. 

\subsection{Definitions, notations, and construction}

An \emph{independent set} in an $r$-graph is a vertex subset containing no edges. A \emph{clique} in an $r$-graph is a vertex subset in which every $r$-set is an edge. A \emph{homogeneous set} in an $r$-graph is a clique or an independent set. \\

Let $G$ be a $3$-graph and let $X, Y, Z \subseteq V(G)$, not necessarily disjoint from each other. Then, let $E_G(X, Y, Z) = \{(x, y, z) \in E(G) : x \in X, y \in Y, z \in Z, x, y, z $ pairwise distinct$\}$. We say $E_G(X,Y,Z)$ is \emph{complete} if $E_G(X,Y,Z) = \{(x,y,z): x \in X, y \in Y, z \in Z, x, y, z $ pairwise distinct$\}$, and $E_G(X,Y,Z)$ is \emph{empty} if $E_G(X,Y,Z) = \emptyset$. If the $3$-graph $G$ is clear from the context, we might omit the index and simply write $E(X,Y,Z)$. Given a set $S\subseteq V(G)$, the \emph{induced subhypergraph} $G[S]$ is the $r$-graph whose vertex set is 
$S$ and whose edge set consists of all of the edges in 
$E(G)$ that have all endpoints in $S$.\\

Let $H$ be an $r$-graph and $t\in \mathbb{N}$. The $t$-\emph{blow-up} of $H$, denoted by $H(t)$, is the $r$-graph  with its vertex set partitioned in $|V(H)|$  sets $V_1,V_2,\ldots,V_{|V(H)|}$, each of size $t$ and edge set 
$\{\{a_1, \ldots, a_r\}: a_j\in V_{i_j}, j =1, \ldots, r,  \{i_1, \ldots, i_r \}\in E(H)\}$. Informally, $H(t)$ is obtained from $H$ by replacing each vertex $i$  with an independent set $V_i$ and each hyperedge $e$ of $H$ with a complete $r$-partite hypergraph with parts corresponding to the vertices of $e$. \\

We say that a $3$-graph $G$ is a \emph{weak $t$-blowup} of $H$, which we also call \emph{weak} $H(t)$, if the vertex set of $G$ can be partitioned into $|V(H)|$  sets $V_1,V_2,\ldots,V_{|V(H)|}$ each of size $t$ such that if $ijk\in E(H)$ then for every $a\in V_i, b\in V_j, c\in V_k$ we have $abc\in E(G)$, and if $ijk\not\in E(H)$ then for every $a\in V_i, b\in V_j, c\in V_k$ we have $abc\not\in E(G)$. Moreover, $V_i$ is an independent set for $i=1, \ldots, |V(H)|$.  Note that we do not impose any condition on 3-tuples of vertices with exactly two vertices in some part $V_i$.\\

Denote by $r_r(t,t)$ the \emph{Ramsey number} of $K_t^r$ versus $K_t^r$, i.e., the minimum number of vertices $m$ such that every 2-coloring of the edges of $K_m^r$ contains a monochromatic $K_t^r$. Erd\H{o}s, Hajnal and Rado~\cite{EHR} showed that there exists constants $c>0$ such that
   $ r_3(t,t)<2^{2^{ct}}.$\\

%
%
Next, we shall provide a construction of  a $(6,10)$-free graph that we shall use to provide an upper bound in Theorem \ref{610theo}. \\
\subsubsection{Construction of the $3$-graph $H_n^{\mathrm{it}}$}
Let $H$ be the $3$-graph with vertex set $[6]$ and edges  $123,$ $124,$ $345,$ $346,$ $561,$ $562,$ $135,$ $146,$ and $236$. 
Note that adding the edge $245$ to $H$ results in a $5$-regular $3$-graph on $6$ vertices, which is $K_4^{3-}$-free and the basis for the construction for the lower bound on $\pi(K_4^{3-})$ by Frankl and F\"{u}redi~\cite{FF}.\\

We define the following iterated unbalanced blow-up of this graph. Denote by $H_n$ the $3$-graph on $n$ vertices where the vertex set is partitioned into six sets $A_1, A_2, A_3, A_4, A_5, A_6$, where 
$$
|A_2|=|A_4|=|A_5|=\left\lceil \frac{n}{3\sqrt{3}} \right\rceil, \quad |A_1|=|A_3|=\left\lceil n\left(\frac{1}{3}-\frac{1}{3\sqrt{3}}\right) \right\rceil   \text{ and }  |A_6|=n\left(\frac{1}{3}-\frac{1}{3\sqrt{3}}\right)+O(1).$$
The $3$-graph $H_n$ consists of all triples $xyz$, where $x\in A_i, y\in A_j$ and $z\in A_k$ and $ijk\in E(H)$. Now, let $H_n^{\mathrm{it}}$ be the $3$-graph constructed from $H_n$ by iteratively adding a copy of $H_{|A_i|}$ with vertex set $A_i$ for all $i\in[6]$ if $|A_i|$ is sufficiently large.

\begin{lemma}\label{construction}
\label{cons610} The graph $H_n^{\mathrm{it}}$ is an $n$-vertex $3$-graph  with $\frac{4}{3+7\sqrt{3}}\binom{n}{3}+o(n^3)$ edges such that  every $6$ vertices in $H_n^{\mathrm{it}}$ induce at most $9$ edges. In particular, $H_n^{\mathrm{it}}$ is $(6,10)$-free.
\end{lemma}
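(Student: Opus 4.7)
The plan has two parts, matching the two claims. For the edge count, I would set up a recursion on $n$. Every hyperedge of $H_n^{\mathrm{it}}$ is either a ``top-level'' edge of $H_n$ --- that is, an edge with its three endpoints lying in three distinct parts $A_i, A_j, A_k$ satisfying $ijk \in E(H)$ --- or an edge of one of the iterated constructions $H_{|A_i|}^{\mathrm{it}}$ sitting entirely inside a single part $A_i$. Writing $E(n) := e(H_n^{\mathrm{it}})$, this decomposition gives
\begin{equation*}
E(n) \;=\; e(H_n) + \sum_{i=1}^{6} E(|A_i|) \;=\; c_1\, n^3 + 3\,E(\alpha n) + 3\,E(\beta n) + O(n^2),
\end{equation*}
where $\alpha = 1/(3\sqrt 3)$, $\beta = 1/3 - \alpha$ are the two normalized part-sizes, and $c_1 := \lim_{n\to\infty} e(H_n)/n^3$ is read off directly from $E(H)$ and the prescribed part-sizes. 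Using the ansatz $E(n) = C\,n^3 + o(n^3)$ and substituting yields the linear equation $C(1 - 3\alpha^3 - 3\beta^3) = c_1$; since $1 - 3\alpha^3 - 3\beta^3 = (7+\sqrt 3)/9$, a short algebraic simplification gives $C = 2/\bigl(3(3+7\sqrt 3)\bigr)$, so that $E(n) = \tfrac{4}{3+7\sqrt 3}\binom{n}{3}+o(n^3)$. The ceilings in the part-sizes, together with the finitely many small parts at which the iteration halts, only affect lower-order terms.

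For the $(6,10)$-freeness, I would prove the following stronger statement by strong induction on $n$: for every $t \in \{0,1,\ldots,6\}$ and every $S \subseteq V(H_n^{\mathrm{it}})$ with $|S| = t$, the induced subhypergraph $H_n^{\mathrm{it}}[S]$ has at most $f(t)$ edges, where
\begin{equation*}
(f(0), f(1), f(2), f(3), f(4), f(5), f(6)) = (0,0,0,1,2,5,9).
\end{equation*}
Specializing to $t=6$ gives the lemma. In the inductive step, set $s_i := |S \cap A_i|$, so $\sum_i s_i = t$. The edges of $H_n^{\mathrm{it}}[S]$ split into top-level cross-part edges and within-part edges inherited from the iteration, yielding
\begin{equation*}
e(H_n^{\mathrm{it}}[S]) \;=\; \sum_{ijk\in E(H)} s_i s_j s_k + \sum_{i=1}^{6} e\bigl(H_{|A_i|}^{\mathrm{it}}[S\cap A_i]\bigr) \;\le\; \sum_{ijk\in E(H)} s_i s_j s_k + \sum_{i=1}^{6} f(s_i),
\end{equation*}
where the final inequality is the inductive hypothesis applied inside each $A_i$. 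It then suffices to verify, for every tuple $(s_1,\ldots,s_6)$ with each $s_i \ge 0$ and $\sum_i s_i = t$, that the right-hand side does not exceed $f(t)$. The base of the induction occurs when no part $A_i$ is large enough for further iteration; there $H_n^{\mathrm{it}} = H_n$ has no within-part edges, and the same case analysis (with the $f(s_i)$ terms dropped) applies.

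The main obstacle is the finite combinatorial check at $t=6$. The delicate compositions are $(2,1,1,1,1,0)$, $(2,2,1,1,0,0)$ and $(2,2,2,0,0,0)$, for which a naive bound would exceed $9$. Dispatching them relies on structural facts about the fixed $6$-vertex graph $H$: that $H$ is $K_4^{3-}$-free (so every $4$-subset of $V(H)$ spans at most $2$ edges), that every $5$-subset of $V(H)$ spans at most $5$ edges, and that no vertex of $H$ is contained in more than $3$ edges within any given $5$-subset of $V(H)$. The analogous verifications at $t\in\{3,4,5\}$ needed to run the induction are routine; the case $t=4$ in particular shows that $H_n^{\mathrm{it}}$ is itself $K_4^{3-}$-free. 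With all compositions handled, equality $e(H_n^{\mathrm{it}}[S]) = 9$ occurs only for the fully-transversal composition $(1,1,1,1,1,1)$, where $S$ induces an exact copy of $H$.
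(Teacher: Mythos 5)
Your proof is correct and follows essentially the same route as the paper's: the edge count solves the same self-consistency equation (the paper writes $d=\tfrac{2\sqrt3}{81}+\tfrac{d}{9}(2-\sqrt3)$, equivalent to your $C(1-3\alpha^3-3\beta^3)=c_1$), and the $(6,10)$-freeness argument is the same case analysis by part-composition using the same structural facts about $H$ (that $H$ is $K_4^{3-}$-free, that five vertices of $H$ span at most five edges, and that every link is a subgraph of $C_5$). Your one refinement --- proving the stronger invariant $e(H_n^{\mathrm{it}}[S])\le f(|S|)$ for all $|S|\le 6$ rather than only $|S|=6$ --- makes the recursion into within-part edges (the paper's Case 5) entirely self-contained, a slightly tidier way to close the induction than the paper's terse appeal to ``previous cases.''
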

We present the proof of this lemma in the appendix.

\subsection{Lemmas}

The following lemma shows that every sufficiently large $3$-graph can be partitioned into many large homogeneous sets. 
\begin{lemma}
\label{cliqueorind}
    Let $t> 0$. Then there exists $n_0=n_0(t)$ such that for every $n\geq n_0$, if $G$ is an $n$-vertex $3$-graph, then $G$ or $G^c$ contains at least $n/t-\sqrt{n}$ pairwise disjoint homogeneous sets of size $t$.
\end{lemma}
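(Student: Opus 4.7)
The plan is a straightforward iterated application of hypergraph Ramsey. Let $R = r_3(t,t)$, which by the Erd\H{o}s--Hajnal--Rado bound $r_3(t,t) \leq 2^{2^{ct}}$ cited above is a finite constant depending only on $t$: every $3$-graph on at least $R$ vertices contains a homogeneous set of size $t$. Note also that the notion ``homogeneous set'' is self-dual under complementation, since a clique in $G$ is an independent set in $G^c$ and vice versa. Hence any family of pairwise disjoint homogeneous sets found inside $G$ is automatically a family of pairwise disjoint homogeneous sets inside $G^c$, and it suffices to produce the desired collection in $G$.

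The procedure I would use is greedy. Maintain an ``unused'' vertex set $U$, initially $U = V(G)$. While $|U| \geq R$, invoke the Ramsey property inside $G[U]$ to extract a homogeneous set $S \subseteq U$ of size $t$, add $S$ to the collection, and replace $U$ by $U \setminus S$. Each iteration removes exactly $t$ vertices, so after $k$ iterations $|U| = n - kt$, and the process halts at the first $k$ with $n - kt < R$. This gives at least
\[
k \geq \left\lfloor \frac{n-R}{t} \right\rfloor \geq \frac{n}{t} - \frac{R}{t} - 1
\]
pairwise disjoint homogeneous sets of size $t$ in $G$, which by the previous observation are also pairwise disjoint homogeneous sets in $G^c$.

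To match the stated lower bound $n/t - \sqrt{n}$ it suffices to guarantee $\sqrt{n} \geq R/t + 1$, so I would simply take $n_0 = \lceil (R/t+1)^2\rceil$. I do not anticipate any genuine obstacle in this argument: once the Erd\H{o}s--Hajnal--Rado bound is invoked as a black box, the rest is bookkeeping. The only thing worth flagging is that the ``or $G^c$'' in the conclusion is not providing an extra degree of freedom; it is a single assertion, expressed using the complement-invariance of homogeneity, and one pass of iterated Ramsey inside $G$ witnesses both sides simultaneously.
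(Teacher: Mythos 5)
Your proposal is correct and is essentially the same iterated-Ramsey greedy argument as the paper's own proof, down to the choice of $n_0$ as roughly the square of the Ramsey bound. Your explicit observation that a homogeneous set in $G$ is automatically homogeneous in $G^c$ (so the ``or $G^c$'' clause is not a disjunction one needs to case on) is left implicit in the paper but is the same underlying idea.
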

\begin{proof}
Let $t>0$ be fixed. 
Set $n_0=(\lceil 2^{2^{ct}} \rceil)^2$ and let $n\geq n_0$. Let $G=G_0$ be an $n$-vertex $3$-graph. Since $n\geq r_3(t,t)$,  there exists a homogeneous set of size $t$ in $G$. Call it $D_0$ and define $G_1=G_0\setminus D_0$. We iteratively repeat this process.  Define $G_{i+1}:=G_i \setminus D_i$, where $D_i$ is a homogeneous set of size $t$ in $G_i$.  We can proceed as long as $ |V(G_i)|> r_3(t,t).$
 Since  $ r_3(t,t)\leq \left\lceil2^{2^{ct}}\right\rceil \leq \sqrt{n_0} \leq \sqrt{n},$ we have found at least $(n-\sqrt{n})/t\geq n/t-\sqrt{n}$ pairwise disjoint homogeneous sets of size $t$ each. 
\end{proof}

The following Lemma analyses the structure \textquotedblleft between" two large vertex sets. This is partly motivated by a result by Fox and Sudakov~\cite{FS08} for $2$-graphs.

\begin{lemma}\label{ramseystuff}
Let $t\geq 0$. Then there exists $n_0$ such that for all $n\geq n_0$ the following holds. Let $G$ be a $3$-graph with vertex set $V(G)=A\cup B$ with $A \cap B = \emptyset$, $|A|=|B|=n$.
Then there exist sets $A' \subseteq A$, $B' \subseteq B$ with $|A'|=|B'|=t$ such that each of the edge sets $E(A', A', B')$ and $E(A', B', B')$
is either empty or complete.
\end{lemma}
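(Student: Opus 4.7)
The plan is to carry out two symmetric Ramsey-type rounds: first homogenize $E(A,A,B)$ on large subsets, then further restrict to also homogenize $E(A,B,B)$. For $v \in B$, the \emph{link} $L_v$ in the slice $E(A,A,B)$ is the $2$-graph on $A$ whose edges are the pairs $\{a,a'\} \subseteq A$ with $\{a,a',v\} \in E(G)$; symmetrically, for $v \in A$, the link $L_v$ in $E(A,B,B)$ is a $2$-graph on $B$.

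\textbf{Round 1.} I pick vertices $b_1,b_2,\ldots,b_N$ from $B$ one at a time and simultaneously build a nested chain $A = A^{(0)} \supseteq A^{(1)} \supseteq \cdots \supseteq A^{(N)}$. Given $A^{(i-1)}$, choose $b_i \in B \setminus \{b_1,\ldots,b_{i-1}\}$ and apply the classical $2$-graph Ramsey theorem to $L_{b_i}$ restricted to $A^{(i-1)}$, producing $A^{(i)} \subseteq A^{(i-1)}$ of size roughly $\tfrac12 \log_2 |A^{(i-1)}|$ on which $L_{b_i}$ is monochromatic — meaning $b_i$ either forms a hyperedge with \emph{every} pair in $A^{(i)}$ or with \emph{no} pair. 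Each $b_i$ thus acquires a label in $\{\text{complete}, \text{empty}\}$ describing its behavior on the final set $A^{(N)}$, and pigeonhole yields $B_1 \subseteq \{b_1,\ldots,b_N\}$ of size at least $N/2$ sharing one label. Setting $A_1 := A^{(N)}$, the edge set $E(A_1, A_1, B_1)$ is then either complete or empty.

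\textbf{Round 2.} Apply the symmetric procedure inside $A_1 \cup B_1$ with the roles of $A$ and $B$ swapped: iteratively pick $a_1,\ldots,a_M \in A_1$, each time shrinking $B_1$ by applying Ramsey's theorem to $L_{a_j}$ (the link of $a_j$ in $E(A,B,B)$). A final pigeonhole on the labels of the $a_j$'s yields $A' \subseteq A_1$ and $B' \subseteq B_1$, each of size at least $t$ (truncate to exactly $t$), such that $E(A',B',B')$ is complete or empty. Crucially, the homogeneity of $E(A_1,A_1,B_1)$ from Round 1 is automatically inherited by $E(A',A',B')$ because ``complete'' and ``empty'' are preserved under passing to subsets, so both desired conclusions hold simultaneously.

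\textbf{Parameters and main obstacle.} Each Ramsey step in Round 1 costs roughly one logarithm in the surviving size, so after $N$ steps $|A^{(N)}|$ is an $N$-fold iterated logarithm of $n$; demanding $N \geq 2t_1$ and $|A^{(N)}| \geq t_1$ forces $n$ to be at least a tower of height $\Theta(t_1)$. Round 2 similarly forces $t_1$ to be a tower of height $\Theta(t)$, so the resulting $n_0(t)$ is doubly tower-like in $t$, which is harmless since only existence is required. Conceptually the argument is routine iterated Ramsey; the only real care needed is in the parameter bookkeeping — ensuring the sets in Round 2 are still large enough, and that the Round-1 guarantee survives intact — which is why I process the two rounds in this order rather than trying to handle both link directions simultaneously.
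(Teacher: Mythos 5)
Your argument is correct and is essentially the same as the paper's proof: an iterated two-coloring Ramsey step that nests one family of homogeneous cliques, followed by a pigeonhole on the "labels" of the fixed vertices, and then a symmetric second round on the surviving sets, with the first round's homogeneity preserved under taking subsets. The only cosmetic difference is the order of the two rounds — you first homogenize $E(A,A,B)$ (iterating over $B$ and shrinking $A$) and then $E(A,B,B)$, whereas the paper does it in the opposite order — and your parameter bookkeeping is slightly looser, but the tower-of-towers bound you sketch is of the same type as the paper's and is ample for the existence claim.
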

\begin{proof}
Let $m = \underbrace{4^{4 ^{\cdots ^ {4^{t}}}}}_{2t} $, let $n_0 = \underbrace{4^{4 ^{\cdots ^ {4^{2t-1}}}}}_{m}$.
Let $A$ and $B$ be sets of size $n \ge n_0$. For $a \in A, X \subseteq B$ we define an auxiliary $2$-graph $G_a^X = (X, \binom X2)$ and an edge-coloring $c_a^X : E(G_a^X) \to \{r,b\}$ with $c_a^X(\{b_1, b_2\}) =  \begin{cases} r,& \{a,b_1, b_2\} \in E(G), \\
b, & \text{else}.\end{cases}$\\ Note that by the standard bound on the diagonal Ramsey number $r_2(s, s) \le 4^{s}$, each $2$-colored $2$-clique on $k$ vertices contains a monochromatic clique of size $\log_4(k)$.

Let $A= \{a_1, \ldots, a_n\}$, let $B_1 \subseteq B$ be the vertex set of a monochromatic clique in $G_{a_1}^B$ of size $\log_4(|B|)$. Now assume $B_i$, $i \ge 1$, has been chosen. Let $B_{i+1} \subseteq B_{i}$ be a monochromatic clique in $G_{a_{i+1}}^{B_i}$ of size $\log_4(|B_i|)$. Thus, after $m$ iterations we obtain a set $B_{m}$ of size $|B_{m}| = \underbrace{\log_4 \cdots \log_4}_{m}(n) \ge 2t -1$, such that for each $a_i$, $i \in [m]$, the set $E(\{a_i\}, B_m, B_m)$  is either empty or complete. Thus, there exists a subset $A'' \subset A, |A''| = \left\lceil\frac{m}{2}\right\rceil \ge  \underbrace{4^{4 ^{\cdots ^ {4^t}}}}_{2t-1} $, such that the set $E(A'', B_m, B_m)$ is either empty or complete.

Now we repeat this process with vertices in $B'' = B_{m}$, to obtain a subset $A' \subseteq A''$, $|A'| = \underbrace{\log_4\cdots \log_4 }_{|B''|}(|A''|) \ge t$, such that for each vertex  $b \in B''$, the set $E(A', A',\{b\} )$ is either empty or complete. Thus, there exists a subset $B' \subseteq B''$, $|B'| \ge \left\lceil\frac{|B''|}2\right\rceil = t$ such that the set $E(A', A', B')$ is either empty or complete.
The sets $A', B'$ satisfy the conditions of the lemma, completing the proof. 
\end{proof}
The next lemma shows that in a $(6,10)$-free $3$-graph there cannot be a large independent set and a large clique that are disjoint.

\begin{lemma}\label{emptycomplete}
	There exists $t_0>0$ such that for all $t\geq t_0$ the following holds. Let $G$ be a $2t$-vertex $3$-graph with vertex set $V(G)=A\cup B$ where $A\cap B=\emptyset$, $|A|=|B|=t$, $G[A]$ is a clique and $G[B]$ is an independent set. Then $G$ induces $(6,10)$. 
\end{lemma}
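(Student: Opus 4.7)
The plan is to apply Lemma~\ref{ramseystuff} in order to regularize the bipartite structure between $A$ and $B$, reducing the problem to four explicit cases, and then in each case exhibit a $6$-vertex subset inducing exactly $10$ edges.

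First I would choose $t_0$ to be at least the $n_0$ that Lemma~\ref{ramseystuff} produces with parameter~$5$, so that we may pass to subsets $A' \subseteq A$ and $B' \subseteq B$ with $|A'|=|B'|=5$ such that each of $E(A',A',B')$ and $E(A',B',B')$ is either empty or complete. Since $A' \subseteq A$ and $B' \subseteq B$, the set $A'$ is still a clique and $B'$ is still an independent set in $G$. Set $\alpha = 1$ if $E(A',A',B')$ is complete and $\alpha = 0$ otherwise, and analogously define $\beta$ for $E(A',B',B')$.

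Next, for a $6$-vertex subset consisting of $k$ vertices from $A'$ and $6-k$ vertices from $B'$ (with $k \in \{0,1,\dots,5\}$), the only possible edges come from triples entirely in $A'$, from triples of type $(A',A',B')$, and from triples of type $(A',B',B')$; hence the induced edge count equals
\[
f(k,\alpha,\beta) \;=\; \binom{k}{3} + \alpha \binom{k}{2}(6-k) + \beta\, k \binom{6-k}{2}.
\]
A short check verifies that in each of the four sign patterns $(\alpha,\beta) \in \{0,1\}^2$ some $k \in \{1,3,5\}$ yields $f(k,\alpha,\beta) = 10$: take $k = 5$ when $(\alpha,\beta)=(0,0)$, since $\binom{5}{3}=10$; take $k=1$ when $(\alpha,\beta)=(1,1)$, since $\binom{5}{2}=10$ mixed triples contribute edges; take $k=1$ (or equivalently $k=5$) when $(\alpha,\beta)=(0,1)$; and take $k=3$ when $(\alpha,\beta)=(1,0)$, since $\binom{3}{3}+\binom{3}{2}\cdot 3 = 10$. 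In every case we have found six vertices inducing exactly $10$ edges, so $G$ induces $(6,10)$.

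I do not foresee any serious obstacle: the proof is essentially forced by Lemma~\ref{ramseystuff} together with the elementary observation that the three-term polynomial $f(k,\alpha,\beta)$ attains the value $10$ for some admissible $k$ under each of the four sign patterns. The only thing that needs to be checked carefully is that $t_0$ is large enough that Lemma~\ref{ramseystuff} applies with output size~$5$, and that no subtle double-counting occurs between the three groups of triples, which is immediate since those groups are disjoint.
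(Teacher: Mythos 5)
Your proposal is correct and takes essentially the same approach as the paper: apply Lemma~\ref{ramseystuff} with output size~$5$ to get $A',B'$ where the two mixed edge sets are each empty or complete, then exhibit a $6$-set inducing $10$ edges in each resulting case. The only difference is cosmetic—the paper handles the cases by naming them directly (complete $E(A',B',B')$ gives $k=1$, empty $E(A',A',B')$ gives $k=5$, and the remaining case gives $k=3$), while you organize the same enumeration through the explicit edge-count polynomial $f(k,\alpha,\beta)$.
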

\begin{proof}
    By Lemma~\ref{ramseystuff}, for sufficiently large $t$, we can we find subsets $A' \subseteq A$, $B' \subseteq B$ with $|A'|=|B'|=5$ such that the two sets $E(A',A',B')$ and $E(A',B',B')$ are either empty or complete. 
	
	If $E(A',B',B')$ is complete, then any vertex from $A'$ together with the 5 vertices from $B'$ induces $(6,10)$. If $E(A',A', B')$ is empty, then any vertex from $B'$ together with the five vertices from $A'$ induces $(6,10)$. Hence, we may assume that $E(A',B',B')$ is empty and $E(A',A', B')$ is complete. But then three arbitrary vertices from $A'$ together with three arbitrary vertices from $B'$ induce $(6,10)$.
\end{proof}

Lemma~\ref{cliqueorind} together with Lemma~\ref{emptycomplete} immediately implies the following lemma.

\begin{lemma}
\label{cliqueorind2}
    There exists $t_0$ such for all $t\geq t_0$ the following holds. There exists $n_0=n_0(t)$ such that for all $n\geq n_0$, if $G$ is a $(6,10)$-free $n$-vertex $3$-graph, then either $G$ or $G^c$ contains at least $n/t-\sqrt{n}$ pairwise disjoint independent sets of size $t$.
\end{lemma}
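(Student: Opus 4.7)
The plan is to combine the two previous lemmas directly, using a symmetry observation. First, I would note that $(6,10)$-freeness is preserved under complementation: since $\binom{6}{3}-10=10$, a $6$-vertex $3$-graph has exactly $10$ edges if and only if its complement does, so $G$ is $(6,10)$-free if and only if $G^c$ is. Since the conclusion of the lemma is also symmetric in $G$ and $G^c$, I can swap them whenever convenient.

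Now, apply Lemma~\ref{cliqueorind} with the given $t$. For $n$ sufficiently large, this produces at least $n/t - \sqrt{n}$ pairwise disjoint homogeneous sets $H_1,\ldots,H_k$ of size $t$ in $G$ or in $G^c$; by the symmetry above, assume without loss of generality that they lie in $G$. Each $H_i$ is either a clique or an independent set in $G$, and the main step is to show that only one of these two types can occur. Suppose not: some $H_i$ is a clique of $G$ and some $H_j$ (with $j\neq i$) is an independent set of $G$. Then $H_i\cap H_j=\emptyset$, $|H_i|=|H_j|=t\geq t_0$, and the induced subgraph $G[H_i\cup H_j]$ satisfies the hypotheses of Lemma~\ref{emptycomplete}, forcing $G$ to induce $(6,10)$ and contradicting our assumption.

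Hence all $k$ homogeneous sets are of the same type. If they are all independent sets in $G$, we are done; if they are all cliques in $G$, they are independent sets in $G^c$, and $G^c$ is the witness. I do not anticipate any real obstacle: the only bookkeeping is to choose $t_0$ as the threshold from Lemma~\ref{emptycomplete} and to set $n_0(t)$ as the corresponding threshold from Lemma~\ref{cliqueorind}, applied to whichever of $G$ or $G^c$ we select.
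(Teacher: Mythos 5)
Your proof is correct and matches the paper's intended argument (the paper simply states that Lemma~\ref{cliqueorind} together with Lemma~\ref{emptycomplete} immediately implies the result, and you have supplied exactly the missing details). The dichotomy argument—that in a $(6,10)$-free graph no clique of size $t\geq t_0$ can coexist disjointly with an independent set of size $t$, so all the homogeneous sets produced by Lemma~\ref{cliqueorind} must be of a single type—is the right observation, and your choice of $t_0$ from Lemma~\ref{emptycomplete} and $n_0(t)$ from Lemma~\ref{cliqueorind} is exactly what is needed.
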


\begin{lemma}\label{veryempty}
	Let $t'>0$. Then there exists $t_0>0$ such that for all $t\geq t_0$ the following holds. Let $G$ be a $(6,10)$-free  $2t$-vertex $3$-graph with vertex set $V(G)=A\cup B$ where $|A|=|B|=t$, $A \cap B = \emptyset$, $G[A]$ and $G[B]$ are independent sets. Then there exists $A' \subseteq A$, $B' \subseteq B$ of sizes $|A'|=|B'|=t'$ such that the two sets $E(A', B', B')$ and $E(A', A', B')$ are empty.
\end{lemma}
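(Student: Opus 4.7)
The plan is to apply Lemma~\ref{ramseystuff} to obtain sets on which both transversal edge-sets are either empty or complete, and then use the $(6,10)$-free hypothesis to rule out the ``complete'' alternative in both cases. Crucially, the hypothesis that $G[A]$ and $G[B]$ are independent means that $5$ vertices from either side span no edges, so a single vertex from the opposite side suffices to form a $6$-set whose edge count is entirely determined by one of the transversal edge-sets.

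Concretely, I would set $s := \max(t',5)$ and let $t_0 := n_0(s)$ be the threshold supplied by Lemma~\ref{ramseystuff}. For $t \ge t_0$, apply that lemma to obtain $A'' \subseteq A$, $B'' \subseteq B$ with $|A''|=|B''|=s$ such that each of $E(A'',A'',B'')$ and $E(A'',B'',B'')$ is either empty or complete. Suppose for contradiction that $E(A'',B'',B'')$ is complete. Choose any single vertex $a \in A''$ and any five vertices $b_1,\ldots,b_5 \in B''$. On the $6$-set $\{a,b_1,\ldots,b_5\}$: triples inside $\{b_1,\ldots,b_5\} \subseteq B$ contribute no edges since $B$ is independent; triples with two vertices in $A''$ do not occur since only $a$ lies in $A''$; and the $\binom{5}{2}=10$ triples of the form $\{a,b_i,b_j\}$ all lie in $E(A'',B'',B'')$ and are therefore edges. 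Thus this $6$-set induces exactly $10$ edges, contradicting $(6,10)$-freeness. Symmetrically, if $E(A'',A'',B'')$ were complete, five vertices in $A''$ together with one vertex in $B''$ would induce exactly $\binom{5}{2}=10$ edges, again a contradiction. Hence both transversal edge-sets are empty, and any $A' \subseteq A''$, $B' \subseteq B''$ of size $t'$ satisfy the conclusion.

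The only subtlety is choosing the right split of the $6$ vertices to land on exactly $10$ edges. A $3+3$ split from a single complete transversal edge-set yields $\binom{3}{1}\binom{3}{2}=9$, and $2+4$ or $4+2$ splits yield $12$ or $4$, none of them $10$; the identity $\binom{5}{2}=10$ is what makes the $5+1$ split work, and once one spots this the rest is a direct application of Lemma~\ref{ramseystuff}. I do not foresee any genuine technical obstacle beyond this combinatorial observation.
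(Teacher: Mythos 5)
Your proof is correct and follows essentially the same approach as the paper: apply Lemma~\ref{ramseystuff}, then rule out the complete alternative for each transversal edge-set by producing a $(6,10)$-set from a $5{+}1$ split. Your choice $s=\max(t',5)$ is a slightly more careful bookkeeping step (the paper applies Lemma~\ref{ramseystuff} directly for $t'$ and then speaks of choosing five vertices from $A'$, which implicitly assumes $t'\ge 5$), but the argument is the same.
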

\begin{proof}
	We apply Lemma~\ref{ramseystuff} for $t'$. Then there exists $t_0$ such that for $t \ge t_0$, we find $A' \subseteq A, B' \subseteq B$, such that the two sets $E(A',A',B')$ and $E(A',B',B')$ are either empty or complete. Assume the set $E(A',A',B')$ is complete. Then we find induced $(6,10)$ by taking any $5$ vertices from $A'$ and $1$ vertex from $B$. By symmetry the same holds for the set $E(A',B',B')$, so in particular, $G[A' \cup B']$ is the empty graph.
\end{proof}

\begin{lemma}\label{almostdone}
There exists $t_0>0$ such that for all $t\geq t_0$ a weak $K_4^3(t)$ and also a weak $K_4^{3-}(t)$ induces $(6,10)$.
\end{lemma}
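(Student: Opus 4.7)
The plan is a cleanup-and-case-analysis argument. In a weak $H(t)$ with parts $V_1, V_2, V_3, V_4$ (for $H \in \{K_4^3, K_4^{3-}\}$), the only triples whose edge status is not prescribed by the weak blow-up definition are the ``mixed'' triples with exactly two vertices in some part $V_i$ and one vertex in some other part $V_j$. I would first normalize these triples by iteratively applying Lemma~\ref{ramseystuff} to each of the six unordered pairs of parts, taking suitable equal-sized subsets and shrinking the relevant parts at each step. For $t$ sufficiently large, after six applications we obtain subsets $V_i' \subseteq V_i$ of size at least $5$ such that for every ordered pair $(i, j)$ with $i \neq j$ the set $E(V_i', V_i', V_j')$ is either empty or complete; write $b_{ij} \in \{0, 1\}$ accordingly. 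Earlier cleanings are preserved under further shrinkings, so at the end all $12$ bits are simultaneously well-defined.

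Next I would split on the $12$-tuple $(b_{ij})_{i \neq j}$ into two cases. \emph{Case A:} some $b_{ij} = 1$. Pick any five vertices $v_1, \ldots, v_5 \in V_i'$ (an independent set, since each $V_i'$ is independent in the weak blow-up) together with a single vertex $c \in V_j'$. The $\binom{5}{3} = 10$ triples inside $V_i'$ are non-edges, the $\binom{5}{2} = 10$ triples $\{v_k, v_\ell, c\}$ are edges by $b_{ij} = 1$, and these are the only triples on these six vertices, giving exactly $10$ edges. \emph{Case B:} all $b_{ij} = 0$, so only transversal triples can be edges on $V_1' \cup \cdots \cup V_4'$. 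If $H = K_4^3$, take $3$ vertices from any one part $V_k'$ and $1$ vertex from each other part; all four triples of part-indices are edges of $H$, yielding $1 + 3 + 3 + 3 = 10$ transversal edges. If $H = K_4^{3-}$, let $\delta$ be the unique vertex of $H$ contained in all three edges of $H$ (equivalently, the one not in the missing triple), and pick $2$ vertices from $V_\delta$, $2$ vertices from some other part, and $1$ vertex from each of the remaining two parts; three of the four triples of part-indices contain $\delta$ and contribute $4 + 4 + 2 = 10$ transversal edges, while the fourth (the missing triple of $H$) contributes $0$.

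The main obstacle is the cleaning step: iterating Lemma~\ref{ramseystuff} six times forces $t_0$ in Lemma~\ref{almostdone} to be a tower-type function of $5$, since each application reduces the part size to an iterated-logarithm of the previous. Once the clean subsets $V_i'$ of size at least $5$ are in hand, the remainder is a direct numerical verification; the key conceptual point is that the bit-vector $(b_{ij})$ cleanly splits the problem into a ``some thickening present'' subcase, handled by a five-in-one-part-plus-one-outside-vertex configuration that uses no transversal edges, and a ``pure blow-up'' subcase, handled by a transversal configuration dictated by the edges of $H$.
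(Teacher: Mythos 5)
Your proof is correct and follows essentially the same route as the paper's: clean up the mixed triples (two vertices in one part, one in another) between each pair of parts via Lemma~\ref{ramseystuff}, then exhibit a 6-set with exactly 10 edges in the resulting structure, using the $2{+}2{+}1{+}1$ distribution for $K_4^{3-}$ and the $3{+}1{+}1{+}1$ distribution for $K_4^3$, exactly as the paper does. The only difference is organizational: the paper factors your Case~A out as the separate Lemma~\ref{veryempty} (which, under the $(6,10)$-freeness hypothesis, forces the mixed edge sets to be empty rather than merely homogeneous) and then only needs to handle your Case~B; you instead invoke Lemma~\ref{ramseystuff} directly and case-split on the resulting bit-vector, which avoids the implicit ``assume $G$ is $(6,10)$-free'' framing needed to invoke Lemma~\ref{veryempty} and makes the argument slightly more self-contained.
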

\begin{proof}
Let $G$ be a weak $K_4^{3-}(t)$ with independent sets $V_1, V_2, V_3, V_4$.
	By iteratively applying Lemma~\ref{veryempty} to all of the tuples $(V_i, V_j)$, $1 \le i < j \le 4$, we obtain an induced copy $H \subseteq G$ of $ K_4^{3-}(2)$ with sets $X_1, X_2, X_3, X_4$, $X_i \subset V_i$, $i \in [4]$, i.e., $H[X_i \cup X_j]$ is empty for all $i \neq j$, the sets $E(X_i, X_j, X_k)$ are complete for $\{i,j,k\} \in \binom{[4]}3$ except for $E(X_2, X_3, X_4)$, which is empty.
	Let $x_1, x_1' \in X_1$, $x_2, x_2' \in X_2$, $x_3 \in X_3$ and $x_4 \in X_4$. Then $\{x_1, x_1', x_2, x_2', x_3, x_4\}$ induces $(6, 10)$.
	
Now assume there is a weak $K_4^3(t)$ called $G$ with independent sets $V_1, V_2, V_3, V_4$. By iteratively applying Lemma~\ref{veryempty} to all of the tuples $(V_i, V_j)$, $1 \le i < j \le 4$, we obtain an induced copy $H \subseteq G$ of $ K_4^{3}(3)$ with sets $X_1, X_2, X_3, X_4$, $X_i \subset V_i$, $i \in [4]$, i.e., $H[X_i \cup X_j]$ is empty for all $i \neq j$ and the sets $E(X_i, X_j, X_k)$ are complete for all $\{i,j,k\} \in \binom{[4]}3$. Let $x_2 \in X_2, x_3 \in X_3, x_4 \in X_4$.
Then $H[X_1 \cup \{x_2, x_3, x_4\}]$ is a $6$-vertex $3$-graph spanning exactly $10$ edges. 
\end{proof}

\begin{lemma}
\label{findingparindK43}
Let $t>0$ be an integer and $\delta>0$. Then there exists $m_0=m_0(t,\delta)$ such that for all $m\geq m_0$ the following holds. Let $G$ be a $3$-graph on $4m$ vertices such that the vertex set of $G$ can be partitioned into four independent sets $V_1,V_2,V_3,V_4$ of size $m$ each and the number of copies of $K_4^{3-}$ with one endpoint from each of the $V_i's$ is at least $\delta m^4$. Then $G$ contains an induced copy of a weak $K_4^3(t)$ or a weak $K_4^{3-}(t)$.  
\end{lemma}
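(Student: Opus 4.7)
The plan is to classify the transversal $K_4^{3-}$-copies by their \emph{type}, use pigeonhole to isolate a single type that occurs on a positive fraction of the transversal 4-tuples, and then invoke the classical multipartite hypergraph Zarankiewicz theorem of Erd\H{o}s to find a complete 4-partite 4-graph in the resulting auxiliary hypergraph; this complete structure will be precisely the required weak blowup.

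In more detail, I would associate to each transversal 4-tuple $(v_1,v_2,v_3,v_4)\in V_1\times V_2\times V_3\times V_4$ its \emph{type} $\tau\subseteq\binom{[4]}{3}$, defined as the set of index triples $\{i,j,k\}$ for which $\{v_i,v_j,v_k\}\in E(G)$. A transversal copy of $K_4^{3-}$ can sit inside a 4-tuple only when $|\tau|\ge 3$, leaving only five admissible types: the full type $\tau_0=\binom{[4]}{3}$ (a transversal $K_4^3$) and the four size-$3$ types (a transversal $K_4^{3-}$ with one prescribed missing triple). Since each 4-tuple supports only $O(1)$ labelled copies of $K_4^{3-}$, the hypothesis of $\delta m^4$ copies gives at least $\delta' m^4$ transversal 4-tuples sharing a common admissible type $\tau$ for some $\delta'=\delta'(\delta)>0$; call these the \emph{$\tau$-good} 4-tuples.

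Next, I would form the auxiliary 4-partite 4-uniform hypergraph $\mathcal{H}$ on $V_1\cup V_2\cup V_3\cup V_4$ whose edges are exactly the $\tau$-good 4-tuples, viewed as unordered 4-sets. Then $\mathcal{H}$ has parts of size $m$ and at least $\delta' m^4$ edges. By the classical theorem of Erd\H{o}s on multipartite hypergraph Zarankiewicz numbers, there exists $m_0=m_0(\delta',t)$ such that for $m\ge m_0$, $\mathcal{H}$ contains a copy of the complete 4-partite 4-graph $K_{t,t,t,t}^{(4)}$; let $W_1\subseteq V_1,\ldots,W_4\subseteq V_4$, each of size $t$, be its parts.

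To finish, every 4-tuple $(w_1,w_2,w_3,w_4)\in W_1\times W_2\times W_3\times W_4$ is $\tau$-good by construction. Fixing any index triple $\{i,j,k\}$ and any $(w_i,w_j,w_k)\in W_i\times W_j\times W_k$, extending by an arbitrary $w_\ell\in W_\ell$ (with $\ell$ the remaining index) and reading off the type condition shows that $\{w_i,w_j,w_k\}\in E(G)$ if and only if $\{i,j,k\}\in\tau$. Hence $E(W_i,W_j,W_k)$ is complete for $\{i,j,k\}\in\tau$ and empty for $\{i,j,k\}\notin\tau$; combined with the fact that each $W_i\subseteq V_i$ is independent, this exhibits $(W_1,W_2,W_3,W_4)$ as the parts of a weak $K_4^{3}(t)$ when $|\tau|=4$ and of a weak $K_4^{3-}(t)$ when $|\tau|=3$. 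The only non-routine ingredient is Erd\H{o}s's multipartite hypergraph Zarankiewicz theorem (which gives a tower-type dependence of $m_0$ on $t$, but this is irrelevant here); the rest is a pigeonhole step and the direct verification above.
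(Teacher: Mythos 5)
Your proof is correct and is essentially the same as the paper's: your \enquote{type} classification is exactly the paper's $5$-coloring of the auxiliary $4$-graph (color $j\in[4]$ recording which triple is missing, color $5$ for a full $K_4^3$), and both arguments then invoke Erd\H{o}s's theorem that complete $4$-partite $4$-graphs have Tur\'an/Zarankiewicz density zero to extract a monochromatic (single-type) $K_{t,t,t,t}^{(4)}$, which is read off as the desired weak blowup.
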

\begin{proof}
Define the auxiliary $4$-graph $H$ on $4m$ vertices where a 4-set spans an edge iff the corresponding four vertices in $G$ form a copy of $K_{4}^{3-}$. We $5$-color the edges of $H$ in the following way: An edge $\{v_1,v_2,v_3,v_4\}$ of $H$ with $v_i\in V_i$ for $i\in [4]$ is colored with $j\in[4]$ if $\{v_1,v_2,v_3,v_4\}\setminus \{v_j\}$ is not an edge in $G$, and it is colored with color $5$ if $\{v_1,v_2,v_3,v_4\}$ induces a $K_{4}^3$ in $G$.

By pigeonhole principle, there exists $(\delta/5) m^4$ edges of the same color. Erd\H{o}s~\cite{MR183654} proved that $\pi(K_4^4(t))=0$ and thus, there exists a monochromatic copy of $K_4^4(t)$ in $H$. Denote by $T$ the vertex set of this monochromatic copy. The $3$-graph $G[T]$ is a weak $K_4^3(t)$ or weak $K_4^{3-}(t)$.
\end{proof}

We will use a supersaturation result discovered by Erd\H{o}s and Simonovits~\cite{MR726456}. The proof presented below follows a proof given by Keevash (Lemma 2.1. in \cite{keevash2011hypergraph}).

\begin{lemma}
\label{supersat}
For $\varepsilon> 0$ and families $\mathcal{F},\mathcal{G}$ of $r$-graphs,  there exists constants $\delta>0$ and $n_0 > 0$ so that if $G$ is an $r$-graph on $n > n_0$ vertices with $e(G) > (\pi({}_{\mathrm{ind}}\mathcal{F},\mathcal{G}) + \varepsilon)\binom{n}{r}$, then $G$ contains at least $\delta \binom{n}{|V(H)|}$ copies of $H$ for some $H\in \mathcal{G}$, or at least $\delta \binom{n}{|V(H)|}$ induced copies of $H$ for some $H\in \mathcal{F}$.
\end{lemma}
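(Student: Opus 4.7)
Write $\pi := \pi({}_{\mathrm{ind}}\mathcal{F},\mathcal{G})$, and assume (as in the applications of this paper) that $\mathcal{F}$ and $\mathcal{G}$ are finite; write $L := |\mathcal{F}|+|\mathcal{G}|$. The plan is the classical Erd\H{o}s--Simonovits averaging: pick a threshold size $m$ at which the density threshold is essentially attained, average over $m$-subsets of $V(G)$, and then pigeonhole to extract many copies of one fixed member of $\mathcal{F}\cup\mathcal{G}$.

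\smallskip

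\noindent\emph{Step 1 (Choosing $m$).} By the definition of $\pi$ as a $\limsup$, I can fix $m=m(\varepsilon,\mathcal{F},\mathcal{G})$ large enough that
\[
\mathrm{ex}\bigl(m,{}_{\mathrm{ind}}\mathcal{F},\mathcal{G}\bigr) \;<\; \Bigl(\pi + \tfrac{\varepsilon}{2}\Bigr)\binom{m}{r}.
\]
So every $r$-graph $J$ on $m$ vertices with $e(J) > (\pi+\varepsilon/2)\binom{m}{r}$ contains some $H\in\mathcal{G}$ as a (not necessarily induced) copy, or some $H\in\mathcal{F}$ as an induced copy.

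\smallskip

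\noindent\emph{Step 2 (Averaging).} Let $G$ be an $r$-graph on $n>m$ vertices with $e(G) > (\pi+\varepsilon)\binom{n}{r}$. Double-counting pairs $(e,S)$ with $e\in E(G)$, $S\in\binom{V(G)}{m}$ and $e\subseteq S$ gives
\[
\sum_{S\in\binom{V(G)}{m}} e(G[S]) \;=\; e(G)\binom{n-r}{m-r} \;>\; \Bigl(\pi+\varepsilon\Bigr)\binom{m}{r}\binom{n}{m}.
\]
Call an $m$-subset $S$ \emph{dense} if $e(G[S]) > (\pi+\varepsilon/2)\binom{m}{r}$. Since $e(G[S])\le \binom{m}{r}$ always, the above inequality together with a Markov-type split forces the number of dense subsets to be at least
\[
\frac{\varepsilon/2}{1-\varepsilon/2}\binom{n}{m} \;\ge\; \frac{\varepsilon}{2}\binom{n}{m}.
\]

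\smallskip

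\noindent\emph{Step 3 (Pigeonhole and counting copies).} By Step 1, each dense $S$ contains, for some $H_S\in \mathcal{F}\cup\mathcal{G}$, a copy of $H_S$ (induced if $H_S\in\mathcal{F}$). By pigeonhole over the $L$ members of $\mathcal{F}\cup\mathcal{G}$, there exists a single $H^\star\in\mathcal{F}\cup\mathcal{G}$ such that at least $(\varepsilon/(2L))\binom{n}{m}$ dense subsets contain (an induced, if applicable) copy of $H^\star$. Each such copy in $G$ lies in at most $\binom{n-|V(H^\star)|}{m-|V(H^\star)|}$ different $m$-subsets, so the number of copies of $H^\star$ in $G$ (induced, if $H^\star\in\mathcal{F}$) is at least
\[
\frac{\varepsilon}{2L}\cdot\frac{\binom{n}{m}}{\binom{n-|V(H^\star)|}{m-|V(H^\star)|}} \;=\; \frac{\varepsilon}{2L}\cdot\frac{\binom{n}{|V(H^\star)|}}{\binom{m}{|V(H^\star)|}} \;\ge\; \delta\binom{n}{|V(H^\star)|}
\]
for $\delta=\delta(\varepsilon,m,L)>0$. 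Choosing $n_0$ large enough that $n>n_0$ absorbs the lower-order terms completes the proof.

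\smallskip

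\noindent\emph{Main obstacle.} The only genuinely delicate point is Step 1: the definition of $\pi$ uses $\limsup$, and one must rule out the possibility that $\mathrm{ex}(m,{}_{\mathrm{ind}}\mathcal{F},\mathcal{G})/\binom{m}{r}$ oscillates above $\pi+\varepsilon/2$ on arbitrarily large scales. Since $\limsup a_n=\pi$ literally means $a_n<\pi+\varepsilon/2$ for all sufficiently large $n$, this is automatic from the definition, so no monotonicity of the Tur\'an density is actually needed. The remainder is a routine averaging calculation.
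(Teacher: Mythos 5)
Your proof is correct and follows essentially the same route as the paper's: choose a threshold order $m$ where the extremal density is within $\varepsilon/2$ of $\pi$, average $e(G[S])$ over $m$-sets to find $\Omega(\binom{n}{m})$ dense ones, then pigeonhole over the finitely many members of $\mathcal{F}\cup\mathcal{G}$ and divide by the overcount $\binom{n-|V(H^\star)|}{m-|V(H^\star)|}$. The only cosmetic differences are that you phrase the density step as a Markov split rather than the paper's direct contradiction argument, and you explicitly flag (correctly) that the $\limsup$ in the definition of $\pi$ is enough for Step 1.
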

\begin{proof}
Let $G$ be an $r$-graph on sufficiently many vertices $n$ with $e(G) > (\pi({}_{\mathrm{ind}}\mathcal{F},\mathcal{G}) + \varepsilon)\binom{n}{r}$.
Fix an integer $k\geq r$, $k\geq |V(H)|$ for all $H\in \mathcal{F} \cup \mathcal{G}$ so that $\textup{ex}(k, {}_{\mathrm{ind}}\mathcal{F},\mathcal{G})\leq \left(\pi({}_{\mathrm{ind}}\mathcal{F},\mathcal{G}) + \frac{\varepsilon}{2}\right)\binom{k}{r}$. There are at least $\frac{\varepsilon}{2}\binom{n}{k}$ $k$-sets $K\subseteq V(G)$
with $e(G[K])>(\pi({}_{\mathrm{ind}}\mathcal{F},\mathcal{G}) + \frac{\varepsilon}{2})\binom{k}{r}$. Otherwise, we would have
\begin{align*}
    \sum_{\substack{K \subseteq V(G)\\ |K|=k}} e(G[K])\leq \binom{n}{k} \left(\pi({}_{\mathrm{ind}}\mathcal{F},\mathcal{G}) + \frac{\varepsilon}{2}\right)\binom{k}{r}+\frac{\varepsilon}{2}\binom{n}{k}\binom{k}{r}=\left(\pi({}_{\mathrm{ind}}\mathcal{F},\mathcal{G}) + \varepsilon\right)\binom{n}{k}\binom{k}{r},
\end{align*}
but we also have
\begin{align*}
     \sum_{\substack{K \subseteq V(G)\\ |K|=k}} e(G[K])=\binom{n-r}{k-r}e(G)>\binom{n-r}{k-r}\left(\pi({}_{\mathrm{ind}}\mathcal{F},\mathcal{G}) + \varepsilon \right)\binom{n}{r}= \left(\pi({}_{\mathrm{ind}}\mathcal{F},\mathcal{G}\right) + \varepsilon)\binom{n}{k}\binom{k}{r},
\end{align*}
a contradiction. By the choice of $k$, each of these $k$-sets $K$ contains an induced copy of some $H\in \mathcal{F}$ or a copy of some $H\in \mathcal{G}$. By the pigeonhole principle, there exists $H_1\in \mathcal{F}$ such that at least $\frac{\varepsilon}{2(|\mathcal{F}|+|\mathcal{G}|)}\binom{n}{k}$ of these $k$-sets $K$ contain an induced copy of $H_1$, or there exists $H_2\in \mathcal{G}$ such that at least $\frac{\varepsilon}{2(|\mathcal{F}|+|\mathcal{G}|)}\binom{n}{k}$ of these $k$-sets $K$ contain a copy of $H_2$. Thus, in the first case, the number of induced copies of $H_1$ is at least 
\begin{align*}
   \frac{ \frac{\varepsilon}{2(|\mathcal{F}|+|\mathcal{G}|)}\binom{n}{k}}{\binom{n-|V(H_1)|}{k-|V(H_1)|}}=\delta \binom{n}{|V(H_1)|}, \quad \text{for} \quad  \delta=\frac{\varepsilon}{2(|\mathcal{F}|+|\mathcal{G}|)\binom{k}{|V(H_1)|}}.
\end{align*}
Similarly, in the second case, the number of copies of $H_2$ is at least 
\begin{equation*}
\delta \binom{n}{|V(H_2)|} \quad \text{for} \quad  \delta=\frac{\varepsilon}{2(|\mathcal{F}|+|\mathcal{G}|)\binom{k}{|V(H_2)|}}. \qedhere
\end{equation*}
\end{proof}


\subsection{Proof of Theorem~\ref{610theo}.}
\begin{proof}[Proof of Theorem~\ref{610theo}]
Let $\varepsilon>0$. Fix an integer $t$ whose existence is guaranteed by Lemma~\ref{almostdone}, such that every weak $K_4^3(t)$ and also every weak $K_4^{3-}(t)$ induces $(6,10)$, see the paragraph before Lemma~\ref{almostdone} for the definition of a weak blow-up. Fix $\delta>0$ and $n_1\in \mathbb{N}$, given by Lemma~\ref{findingparindK43}, such that every $(6,10)$-free $3$-graph $G$ on $n\geq n_1$ vertices satisfying $e(G)\geq (\pi({}_{\mathrm{ind}}\mathcal{F}_6^{10},\{K_{4}^{3-}\})+\varepsilon)\binom{n}{3}$ contains at least $2\delta \binom{n}{4}$ copies of $K_4^{3-}$. Let $m_0=m_0(t,\delta)$ be given by Lemma~\ref{findingparindK43}. Fix integers $m_1$ and $n_2$ whose existence is guaranteed by Lemma~\ref{cliqueorind2}, such that $m_1\geq m_0$ and for all $n\geq n_2$, if $G$ is $(6,10)$-free $n$-vertex $3$-graph, then either $G$ or $G^c$ contains at least $n/m_1-\sqrt{n}$ pairwise disjoint independent sets of size $m_1$. Choose $n_0:=\max\{n_1,n_2,m_1^2,\lceil40000\delta^{-2}\rceil\}$ and let $n\geq n_0$. 

Let $G$ be a $(6,10)$-free $n$-vertex $3$-graph satisfying the density assumption \eqref{edge density}:
\begin{align}\nonumber
    \frac{e(G)}{\binom{n}{3}}\in \left[\pi({}_{\mathrm{ind}}\mathcal{F}_6^{10},\{K_4^{3-}\})+\varepsilon , 1-\pi({}_{\mathrm{ind}}\mathcal{F}_6^{10},\{K_4^{3-}\})-\varepsilon  \right].
\end{align}
By Lemma~\ref{cliqueorind2} either $G$ or $G^c$ contains at least $n':=n/m_1-\sqrt{n}$ pairwise disjoint independent sets, each of size $m_1$. Since the density assumption is symmetric, and since $G$ induces $(6,10)$ if and only if $G^c$ induces $(6,10)$, we can assume, without loss of generality, that $G$ contains at least $n'$ pairwise disjoint independent sets $V_1,V_2,\ldots, V_{n'}$ of size $m_1$ each.

By Lemma~\ref{supersat}, $G$ contains at least $2\delta \binom{n}{4}$ (not necessarily induced) copies of $K_4^{3-}$. 
We call a $4$-set \emph{transversal} in $G$ if each of the four vertices is in a different $V_i$. A copy of $K_4^{3-}$ in $G$ is called \emph{transversal} if the vertex set of the copy is transversal in $G$. The number of $4$-sets which are not transversal in $G$ is at most
\begin{equation}\nonumber
    \sqrt{n}n^3+n'\binom{m_1}{2}n^2\leq n^{\frac{7}{2}}+m_1n^3 \leq 2n^{\frac{7}{2}}, 
\end{equation}
for $n\geq m_1^2$. The number of transversal copies of $K_4^{3-}$ in $G$ is at least $\frac{3}{2}\delta \binom{n}{4}$, since
\begin{align*}
    2\delta \binom{n}{4}-\frac{3}{2}\delta \binom{n}{4}=\frac{\delta}{2} \binom{n}{4}\geq \frac{\delta}{2} \frac{n^4}{2\cdot 4!}=\frac{\delta}{96}n^4> 2n^{7/2}, 
\end{align*}
where the last inequality holds for $n\geq 40000\delta^{-2}$.
By pigeonhole principle there exist $1\leq i_1<i_2<i_3<i_4\leq n'$, such that the number of copies of $K_4^{3-}$ with one endpoint in each of $V_{i_1},V_{i_2},V_{i_3},V_{i_4}$ is at least 
\begin{align*}
\frac{\frac{3}{2}\delta \binom{n}{4}}{\binom{n'}{4}}\geq \frac{\delta \frac{n^4}{4!}}{\frac{\left(\frac{n}{m_1}\right)^4}{4!}} =\delta m_1^4.
\end{align*}
By Lemma~\ref{findingparindK43}, the $3$-graph $G[V_{i_1}\cup V_{i_2}\cup V_{i_3} \cup V_{i_4}]$ contains a weak $K_4^{3-}(t)$ or a weak $K_4^3(t)$ as an induced subhypergraph. This contradicts Lemma~\ref{almostdone}.

We conclude $\sigma_3(6,10)\geq 1-2\pi({}_{\mathrm{ind}}\mathcal{F}_6^{10},\{K_4^{3-}\})$. In fact, $\sigma_3(6,10)= 1-2\pi({}_{\mathrm{ind}}\mathcal{F}_6^{10},\{K_4^{3-}\})$ holds by the following argument: Let $G$ be an $n$-vertex $K_4^{3-}$-free and $(6,10)$-free $3$-graph with exactly $\textup{ex}(n,{}_{\mathrm{ind}}\mathcal{F}_6^{10},\{K_4^{3-}\})$ many edges. Since $G$ is $K_4^{3-}$-free, every four vertices span at most $2$ edges, so using double counting, we see that every  $6$ vertices span at most $\binom{6}{4}\cdot 2/3 =10$ edges. Since $G$ is also $(6,10)$-free, every $6$ vertices span only at most $9$ edges. We conclude that every subgraph $G' \subseteq G$ is $(6,10)$-free. Further, by symmetry, also the complement $3$-graph of any $G' \subseteq G$ is $(6,10)$-free. This proves the first part of the theorem. \\

To get specific numerical  bounds on the forcing density, recall again that  if 
$$ \frac{e(G)}{\binom{n}{3}}\in \left[\pi({}_{\mathrm{ind}}\mathcal{F}_6^{10},\{K_4^{3-}\})+\varepsilon , 1-\pi({}_{\mathrm{ind}}\mathcal{F}_6^{10},\{K_4^{3-}\})-\varepsilon  \right],$$
then $G$ induces $(6,10)$. In particular, if $ \frac{e(G)}{\binom{n}{3}}\in  \left[\pi(K_4^{3-})+\varepsilon , 1-\pi(K_4^{3-})-\varepsilon  \right]$, 
then $G$ induces $(6,10)$. 
The Tur\'an density of $K_4^{3-}$ is not known precisely. The best currently known bounds on the Tur\'an density of $K_4^{3-}$ are $0.28571 \approx \frac{2}{7}\leq \pi(K_4^{3-}) \leq 0.28689$, where the lower bound construction was given by Frankl and F\"{u}redi~\cite{FF}. The upper bound was proved by Vaughan~\cite{flagmatic} who applied the flag algebra method, see also the webpage of Lidick\'{y}~\cite{flagmatic2}. 
Thus $\sigma_3(6,10)\geq 1 - 2\cdot  0.28689 = 0.42622.$
However, from Lemma \ref{construction}, we have that  there is a $3$-graph on $n$ vertices and $\frac{4}{3+7\sqrt{3}}\binom{n}{3}(1+o(1))$ hyperedges, such that each of its subgraphs is $(6,10)$-free. Moreover, the complement of this $3$-graph has 
$\left(1-\frac{4}{3+7\sqrt{3}}\binom{n}{3}\right)(1+o(1))$ hyperedges and each of its supergraphs is $(6,10)$-free. Thus $\sigma_3(6,10)\leq 1 - 2\frac{4}{3+7\sqrt{3}} = 0.47105$.\end{proof}

%
                                                \section{Proof of Theorem \ref{Diophantine}}
\label{sec:Thm2}
\subsection{Constructions and notations}

We shall first construct a special class of $3$-graphs.\\

Let $n, k \in \mathbb N$, $k \le n$ and  $S \subseteq [2]$. Let $G(S,n,k)$ be the $3$-graph with vertex set $A\cup B$, $|A| = k$, $|B| = n-k$, where $A$ and $B$ are disjoint such that $A$ induces a clique,  $B$ induces an independent set, called {\it base set},  and we have the additional edges  $\bigcup_{i \in S} E_i$, where $E_i = \{A' \cup B' : A' \in \binom Ai, B' \in \binom B{3-i}  \}$.
Thus, $G_\emptyset(n,k)$ is just  a clique on $k$ vertices and $n-k$ isolated vertices, and $G_{[2]}(n,k)$ is the complete graph on $n$ vertices with a clique of size $n-k$ removed. For an illustration of $G(\{2\},n,k)$ see Figure~\ref{fig:G2nk}.

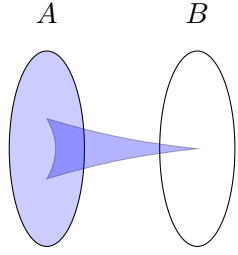
\begin{figure}[h!]
\begin{center}
\begin{tikzpicture}[scale=2.0]
\draw[fill=blue!20] (0,0) coordinate(x1) ellipse (0.25cm and 0.65cm);
\draw (1,0) coordinate(x1) ellipse (0.25cm and 0.65cm);
\draw
(0,0.2) coordinate(1) 
(0,-0.2) coordinate(2) 
(0,0.3) coordinate(3) 
(1,0.2) coordinate(4) 
(1,0) coordinate(5) 
(1,0.4) coordinate(6) 
;
\draw[hyperedge] (1) to[bend left] (2) to[bend left=5] (5) to[bend left=5] (1);
\draw (0,0.9) node{$A$};
\draw (1,0.9) node{$B$};
\end{tikzpicture}
\caption{Illustration of $G(\{2\},n,k)$.}
\label{fig:G2nk}
\end{center}
\end{figure}

Note that the complement of $G(S, n, k)$ is $G([2]-S, n, n-k)$.  Let $f(S,n,k) = |E(G(S,n,k))|$.   
We call a $3$-graph $G$ \emph{m-sparse} if every subset of $m$ vertices in $G$ induces at most $m$ edges. 
We say that a $3$-graph $G$ is  {\it canonical plus with parameters $(S,n,k)$}, or simply {\it canonical plus} if $G$ is a $3$-graph obtained as a union of $G(S,n,k)$ and an $m$-sparse graph whose vertex set is the base independent set of $G(S,n,k)$.  A $3$-graph $G$ is  {\it canonical minus with parameters $(S,n,k)$}, or simply {\it canonical minus}, if $G$ is the  complement of a canonical plus graph with parameters $([2]-S,n, n-k)$. 
Note that a canonical minus graph with parameters $(S, n,k)$ is obtained from the graph $G(S,n, k)$ by removing edges of a copy of an $m$-sparse graph from the clique $A$.
We see that (letting $\binom{y}{x}=0$ for $y<x$), that  $$f(S,n,k) = \binom{k}{3}  + \sum\limits_{i \in S} \binom{k}{i}\binom{n-k}{3-i}.$$

Moreover,  $|f(S,n,x)-f(S,n,x-1)| \in O(n^{2})$. 
Note that any induced subgraph of a canonical plus $3$-graph with parameters $(S, n, k)$ is a canonical plus $3$-graph with parameters $(S, n', k')$, for some $n'$ and $k'$.  
A similar statement holds for canonical minus graphs. Thus, these two classes of graphs are hereditary.
%
We see that if an $m$-vertex $3$-graph is canonical  plus with parameters $(S, m, x)$, then the number  of edges in such a graph  is in  the interval  $[f(S,m,x), f(S,m,x) + m]$.
Similarly, the number  of edges in a canonical  minus graph  with parameters $(S, m, x)$  is in the interval $ [f(S,m,x)-m, f(S,m,x)] $.  Thus, if $f$ is the number of edges of a graph that could be represented as both a canonical plus and a canonical minus graph with first parameter $S$ and $m$ vertices, then $f\in F(S,m)$,  where $$F(S, m)= \bigcup\limits_{x=0}^{m-1} [f(S,m,x), f(S,m,x) + m] \cap \bigcup\limits_{x=1}^m [f(S,m,x)- m, f(S,m,x)]\subseteq \left\{0,1,\ldots, \binom{m}{3} \right\}.$$

\subsection{Proof idea}

\noindent
We are using the following general principle:

\begin{prop}
Let $\cC_1, \ldots, \cC_k$ be hereditary classes of $r$-graphs such that for any $c$,  $0<c<1/2$, any sufficiently large $n$, and any $e$ with $c \binom nr \leq e \leq (1-c)\binom nr$, there is a graph $G_i\in \cC_i$ on $n$ vertices and $e$ edges for all $i=1, \ldots, k$. If for any sufficiently large $n$ and some $i\in [k]$, each $n$-vertex graph in $\cC_i$ is $(m,f)$-free, then $\sigma_r(m,f)=0$.
\end{prop}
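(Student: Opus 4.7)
The plan is to unpack the two hypotheses and observe that each edge count in the bulk range of $[0,\binom{n}{r}]$ is explicitly witnessed by a graph in a single fixed class $\cC_i$. Concretely, I would first fix an arbitrary $c \in (0, 1/2)$ and, using the second hypothesis, select a single index $i \in [k]$ such that for every sufficiently large $n$, every $n$-vertex member of $\cC_i$ is $(m,f)$-free (i.e.\ contains no $m$-vertex subset inducing exactly $f$ edges). The key observation is that the witnessing class can be chosen once and for all; even if one reads the hypothesis with $i$ depending on $n$, a pigeonhole argument over infinitely many $n$ yields the same conclusion, since $\sigma_r(m,f)$ is defined as a $\limsup$.

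Next, for each such $n$ and each integer $e$ with $c\binom{n}{r} \le e \le (1-c)\binom{n}{r}$, the richness hypothesis applied to $\cC_i$ produces a graph $G \in \cC_i$ on $n$ vertices with exactly $e$ edges. Since $G$ is $(m,f)$-free, we have $(n,e) \not\to_r (m,f)$ directly from the definition. Hence for all large $n$, the set $\{e : (n,e) \to_r (m,f)\}$ is contained in the two tail intervals $[0,\lceil c\binom{n}{r}\rceil)\cup(\lfloor (1-c)\binom{n}{r}\rfloor,\binom{n}{r}]$, whose total cardinality is at most $2c\binom{n}{r} + O(1)$.

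Dividing by $\binom{n}{r}$ and taking $\limsup$ as $n \to \infty$ gives $\sigma_r(m,f) \le 2c$, and since $c \in (0,1/2)$ was arbitrary, $\sigma_r(m,f) = 0$. There is no real obstacle in this abstract step beyond getting the quantifiers correct: the content of the proposition is effectively a packaging statement whose usefulness is entirely downstream, when one verifies for concrete hereditary classes (such as the canonical plus and canonical minus graphs defined above) that both the richness along the edge-count axis and the $(m,f)$-freeness can be simultaneously established. The hereditary assumption itself plays no role in the abstract argument; it is listed because it is the natural regularity property enjoyed by the classes to which the proposition will be applied.
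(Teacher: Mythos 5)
Your core argument is correct and is exactly the intended one; the paper states the proposition as a ``general principle'' and does not spell out a proof, but the structure you give (use the richness hypothesis to produce, for each bulk $e$, an $n$-vertex $(m,f)$-free graph with $e$ edges, hence $(n,e)\not\to_r(m,f)$, hence the good set of $e$ has density at most $2c$, then let $c\to 0$) is the only natural one. Your remark that the hereditary assumption is not used in the abstract step but matters in applications -- where one verifies the $(m,f)$-freeness only at the $m$-vertex level and pushes it up to $n$-vertex graphs via heredity -- is also accurate.

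One thing in your side remark is off, though it happens not to damage the proof. You assert that if the index $i$ is allowed to depend on $n$, a pigeonhole argument over infinitely many $n$ ``yields the same conclusion, since $\sigma_r(m,f)$ is defined as a $\limsup$.'' That reasoning does not work: pigeonhole would give a single $i^*$ that is good for some infinite set $N$ of values of $n$, and from that you could only bound $\frac{|\{e:(n,e)\to_r(m,f)\}|}{\binom nr}$ for $n\in N$. A $\limsup$ is controlled by the worst subsequence, so bounding it along one subsequence proves nothing. Fortunately, fixing $i$ is unnecessary in the first place: under the reading in which $i=i(n)$ may depend on $n$, for each sufficiently large $n$ and each $e$ in the bulk range you simply instantiate the richness hypothesis in the class $\cC_{i(n)}$, obtaining an $(m,f)$-free $n$-vertex graph with $e$ edges. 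That already gives the $2c$ bound for every sufficiently large $n$, not merely along a subsequence, which is what the $\limsup$ actually requires. So the correct fix is to drop the pigeonhole step, not to lean on it.
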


Here, we use two classes $\cC_1$ and $\cC_2$ of $3$-graphs that are canonical plus and canonical minus with the same first parameter $S$. 
Specifically, the main idea of the proof of Theorem \ref{Diophantine} is that for any sufficiently large $n$,  any $S\subseteq [2]$, and any $e$ in the interval $[c\binom n 3, (1-c)\binom n3]$ for $0<c<1/2$, there is a canonical plus $3$-graph $G^+_{c,S}$ and a canonical minus $3$-graph $G^-_{c,S}$ with first parameter $S$, on $n$ vertices and $e$ edges. 
If, for a pair $(m,f)$, $f \not\in F(S,m)$ for some $S \subseteq [2]$, then the pair $(m,f)$ is not representable as a canonical plus or canonical minus graph with first parameter $S$.
Then in particular,  $G^+_{c,S}$ and $G^-_{c,S}$  are $(m,f)$-free and  $(n, e) \not\to (m,f)$. Letting $c$ be arbitrarily small, we 
conclude that $\sigma_3(m,f)=0$ for such a pair $(m,f)$. Finally, we derive number theoretic conditions for a pair $(m,f)$ not being representable by a canonical plus or a canonical minus  graph.

\subsection{Lemmas}
In the following lemmas, $n,m,f,e$ are non-negative integers with $m > 3$, $0 < f < \binom m3$.  In \cite{W22} it was shown that for any $m\leq 15$ and for any $0<f<\binom m3$ such that $(m,f)\neq (6,10)$, $\sigma_3(m,f)=0$.  Thus, we can assume that $m\geq 16$. The following folklore result can be obtained by a standard probabilistic argument. 
\begin{lemma}\label{lem:probabilistic}
Let $m>0$. Then for any sufficiently large $n$ there exists an $n$-vertex $3$-graph with $\Omega(n^{2+\frac1{m+1}})$ edges which is  $m$-sparse.\end{lemma}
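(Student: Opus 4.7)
The plan is to apply the probabilistic deletion (alteration) method with a carefully tuned edge density. I would take the random $3$-graph $G = G^{(3)}(n,p)$ on $[n]$ with $p := n^{-1+1/(m+1)}$; then the expected number of edges is $p\binom{n}{3} = \Theta\bigl(n^{2+1/(m+1)}\bigr)$, which already matches the target order of magnitude.

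Next I would count the bad $m$-subsets, namely those $S \in \binom{[n]}{m}$ for which $G[S]$ contains at least $m+1$ edges. A union bound over the $\binom{\binom{m}{3}}{m+1}$ possible $(m+1)$-edge configurations inside a fixed $S$ yields $\Pr[S\text{ is bad}] \leq C_m\, p^{m+1}$ for some constant $C_m$ depending only on $m$. The key quantitative balance is that my choice of $p$ satisfies $p^{m+1} = n^{-m}$, so summing over the at most $n^m$ candidates $S$ shows that the expected number $Y$ of bad $m$-subsets is $O_m(1)$, a constant independent of $n$. The exponent of $p$ was engineered precisely to make $n^m \cdot p^{m+1}$ bounded.

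Finally, I would form $G'$ by deleting every triple that lies inside at least one bad $m$-subset; then every $m$-subset of $G'$ carries at most $m$ edges, so $G'$ is $m$-sparse. Since each bad subset contributes at most $\binom{m}{3}$ deleted triples, the total number of deletions is at most $\binom{m}{3}\cdot Y$. Combining with the previous estimates,
\[
\mathbb{E}[e(G')] \;\geq\; \mathbb{E}[e(G)] - \tbinom{m}{3}\mathbb{E}[Y] \;=\; \Omega\bigl(n^{2+1/(m+1)}\bigr),
\]
so for all sufficiently large $n$ some realization produces an $m$-sparse $3$-graph with $\Omega(n^{2+1/(m+1)})$ edges. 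I do not anticipate a real obstacle: the whole argument is a single-parameter tuning of the classical deletion method, and the only calibration needed is that $n^m p^{m+1}$ stays bounded, which is immediate once $p$ is fixed as above. A slightly more careful optimization would in fact yield $\Omega(n^{2+3/m})$, but the weaker bound in the statement suffices for its downstream use and has the cleanest proof.
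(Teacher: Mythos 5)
Your proof is correct and is the standard alteration/deletion argument. The paper does not include its own proof of this lemma, stating only that it is ``folklore'' and deferring to Weber's earlier paper for details; the construction you describe (sample $G^{(3)}(n,p)$ with $p = n^{-1+1/(m+1)}$, union-bound the bad $m$-sets, and delete all edges inside them so that the expected loss is $O(1)$) is precisely the intended argument, and your observation that a more aggressive choice of $p$ would give the sharper exponent $2 + 3/m$ is also accurate though unnecessary here.
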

For a proof of Lemma~\ref{lem:probabilistic} see e.g. \cite{W22}. The next lemma is a generalization of a similar statement proven in \cite{EFRS} for graphs. 
\begin{lemma}\label{lem:adding_clique}
	Let  $S\subseteq [2]$ and $c$ be a constant, $0< c <1/2$. For $n \in \mathbb N$ sufficiently large and any  $e$ where $c<e <(1-c)\binom{n}{3}$, 
	there exist  $3$-graphs $G_1(n,e)$ and $G_2(n,e)$  on $n$ vertices and $e$  edges  that are  canonical plus  and canonical minus respectively,  with first  parameter $S$. 
		\end{lemma}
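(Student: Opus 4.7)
The plan is to construct the canonical plus graph $G_1(n,e)$ directly and then obtain $G_2(n,e)$ by taking a complement. To build $G_1(n,e)$ for given $S$ and $e$, I set $k = \max\{j \in \{0, 1, \ldots, n\} : f(S,n,j) \le e\}$, which is well-defined and at most $n-1$, since $f(S,n,0) = 0 \le e$ while $f(S,n,n) = \binom{n}{3} > e$. By maximality, $f(S,n,k+1) > e$, so
\[
0 \le e - f(S,n,k) < f(S,n,k+1) - f(S,n,k) = O(n^2),
\]
using the step-size estimate recorded just after the definition of $f$.

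The key point is that $n - k = \Omega(n)$. The non-edges of $G(S,n,k)$ are precisely the triples in $E_0 \cup \bigcup_{i\in[2]\setminus S} E_i$, of which there are $\binom{n-k}{3} + \sum_{i \in [2]\setminus S}\binom{k}{i}\binom{n-k}{3-i}$. Each summand contains at least one factor $\binom{n-k}{j}$ with $j \ge 1$, so writing $n - k = \varepsilon n$, the total is $O(\varepsilon n^3)$. Since $\binom{n}{3} - f(S,n,k) \ge \binom{n}{3} - e \ge c\binom{n}{3} = \Omega(n^3)$, we conclude $\varepsilon$ is bounded below by a positive constant depending on $c$, i.e., $n - k = \Omega(n)$. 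Applying Lemma~\ref{lem:probabilistic} then yields an $m$-sparse $3$-graph on $n-k$ vertices with $\Omega((n-k)^{2+1/(m+1)})$ edges; since deleting edges preserves $m$-sparsity and $(n-k)^{2+1/(m+1)} = \omega(n^2)$ dominates $e - f(S,n,k) = O(n^2)$ for $n$ large, we extract an $m$-sparse graph $H$ with exactly $e - f(S,n,k)$ edges on the base independent set $B$ of $G(S,n,k)$. Because $E_0$ is disjoint from $E(G(S,n,k))$, the union $G(S,n,k) \cup H$ has precisely $e$ edges and is by definition canonical plus with first parameter $S$; this is $G_1(n,e)$.

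For $G_2(n,e)$, set $S' = [2]\setminus S$ and $e' = \binom{n}{3} - e$, so $e' \in [c\binom{n}{3}, (1-c)\binom{n}{3}]$. Applying the preceding construction yields a canonical plus $3$-graph with first parameter $S'$ and exactly $e'$ edges; its complement has $e$ edges and is, by definition, canonical minus with first parameter $S$, so it serves as $G_2(n,e)$. The main obstacle is the verification $n - k = \Omega(n)$, which is what guarantees the supply of $m$-sparse edges $T_{n-k}$ dominates the $O(n^2)$ step size of $f(S,n,k)$; this also silently handles the non-monotonicity of $f(S,n,k)$ in $k$ that can occur (for instance when $S = \{1\}$), since the maximality choice of $k$ simply skips over any local dips in $f$.
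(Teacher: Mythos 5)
Your proof is correct and takes essentially the same approach as the paper: pick the largest $k$ with $f(S,n,k)\le e$, note the gap $e-f(S,n,k)$ is $O(n^2)$, overlay an $m$-sparse graph (from Lemma~\ref{lem:probabilistic}) on the base independent set, trim to exactly $e$ edges, and complement for the canonical-minus case. The one place you diverge is in showing $n-k=\Omega(n)$: the paper gets it directly from $\binom{k}{3}\le f(S,n,k)\le e\le(1-c)\binom{n}{3}$, giving $k\le(1-c)^{1/3}n+O(1)$, whereas you count non-edges of $G(S,n,k)$ and bound them by $O(\varepsilon n^3)$; both are valid, the paper's is a touch more direct. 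Your explicit choice of $k$ by maximality and the remark about non-monotonicity of $f(\{1\},m,\cdot)$ tidy up a point the paper's phrasing (``either $f(S,n,k)\le e\le f(S,n,k+1)$ or $f(S,n,k)\le e\le f(S,n,k-1)$'') leaves slightly informal.
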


\begin{proof}
Let $n$ be a given sufficiently large integer. Let $k$  be a non-negative integer such that either  $f(S,n,k)\le e \le f(S,n,k+1)$ or $f(S,n,k)\le e \le f(S,n,k-1)$  holds. Without loss of generality assume that $f(S,n,k)\le e \le f(S,n,k+1)$. Let $c_1=1-c$.
     Note that since $e \leq c_1\binom{n}{3}$, $\binom{k}{3}  \leq  c_1\binom{n}{3}$, we have $k \leq \sqrt[3]{c_1}n +1\leq c'n$, where $c'<1$ is a constant.  \\
 
 Let $G'$ be an $m$-sparse $3$-graph on $n-k$ vertices with $|E(G')| \geq (n-k)^{2+\frac{1}{m+1}}$.
  The existence of $G'$ is guaranteed by Lemma~\ref{lem:probabilistic}.  Define $G''$ to be the $3$-graph obtained as a union of $G(S,n,k)$ and a copy of $G'$ on the vertex set that is the base independent set of $G(S,n,k)$. 	
	Then $|E(G'')| \geq f(S,n,k)+ (n-k)^{2+\frac{1}{m+1}}  \geq f(S,n,k+1) \geq e$.
	Here, the second inequality holds since $f(S,n,k+1)-f(S,n,k)= O(n^{2})$.
	Finally, let $G_1(n,e)$ be a subgraph of $G''$ with $e$ edges, obtained from $G''$ by removing some edges of $G'$. 	\\
	
	For the second part of the lemma, take $G_2(n,e)$ to be the complement of $G_1(n, \binom n3 - e)$ with first parameter $[2]-S$,  guaranteed by the first part of the lemma.
	\end{proof}

\begin{lemma}\label{F}
    Let $S \subseteq [2]$. If $f \not\in F(S, m)$, 
    then $\sigma_3(m,f) = 0$.
\end{lemma}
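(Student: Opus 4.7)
The plan is to apply the principle formalized at the start of this section. Fix a constant $c \in (0,1/2)$; I will show that for all sufficiently large $n$ and every integer $e$ with $c\binom n3 < e < (1-c)\binom n3$ there exists an $n$-vertex $3$-graph on $e$ edges containing no $m$-vertex induced subgraph with exactly $f$ edges. This forces $\sigma_3(m,f) \le 2c$, and letting $c \to 0$ yields $\sigma_3(m,f) = 0$.

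First I would check that the classes of canonical plus and canonical minus $3$-graphs (with first parameter $S$) are hereditary, and determine which edge counts can occur for an $m$-vertex induced subgraph in each class. Consider a canonical plus $3$-graph with parameters $(S,n,k)$ and an $m$-subset $V'$ meeting the clique side $A$ in $x$ vertices and the base side $B$ in the remaining $m-x$. The induced $3$-graph on $V'$ has exactly $f(S,m,x)$ edges from the $G(S,\cdot,\cdot)$ skeleton, plus between $0$ and $m$ extra edges coming from the $m$-sparse graph restricted to $V' \cap B$, so its edge count lies in $[f(S,m,x), f(S,m,x)+m]$. Since the hypothesis $0<f<\binom m3$ rules out $x=m$ (for which the count would equal $\binom m3$), every realizable edge count must lie in
\[
U^+(S,m) \;:=\; \bigcup_{x=0}^{m-1}[f(S,m,x),\ f(S,m,x)+m].
\]
An entirely symmetric argument -- or complementation, using that a canonical minus $3$-graph with parameters $(S,n,k)$ is obtained from $G(S,n,k)$ by deleting edges of an $m$-sparse graph on the clique $A$ -- yields, for an $m$-vertex induced subgraph of a canonical minus $3$-graph,
\[
U^-(S,m) \;:=\; \bigcup_{x=1}^{m}[f(S,m,x)-m,\ f(S,m,x)],
\]
where $x=0$ is excluded because $f>0$. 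By definition, $F(S,m)=U^+(S,m) \cap U^-(S,m)$.

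The hypothesis $f\notin F(S,m)$ now forces $f \notin U^+(S,m)$ or $f \notin U^-(S,m)$. In the first case, no canonical plus $3$-graph can induce $(m,f)$, so the graph $G_1(n,e)$ supplied by Lemma~\ref{lem:adding_clique} is $(m,f)$-free for every admissible $e$; in the second case, $G_2(n,e)$ plays exactly the same role. Either way, $(n,e)\not\to_3(m,f)$ for all $e$ with $c\binom n3 < e < (1-c)\binom n3$, so at most $2c\binom n3 + o(n^3)$ values of $e$ can satisfy the arrow relation, giving $\sigma_3(m,f)\le 2c$; since $c\in(0,1/2)$ is arbitrary, $\sigma_3(m,f)=0$. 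The one technical step requiring care is the bound $\le m$ on the $m$-sparse contribution when $V' \cap B$ has fewer than $m$ vertices; this is handled by extending $V' \cap B$ to any $m$-subset of the (large) base set and invoking $m$-sparseness there, which applies verbatim since any subset has at most as many edges as the $m$-set containing it.
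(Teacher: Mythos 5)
Your proof is correct and follows essentially the same route as the paper: it invokes Lemma~\ref{lem:adding_clique} to produce canonical plus and canonical minus graphs with arbitrary edge counts in $[c\binom n3,(1-c)\binom n3]$, observes that the $m$-vertex induced subgraphs of these have edge counts confined to $U^+(S,m)$ and $U^-(S,m)$ respectively, and concludes from $f\notin F(S,m)=U^+\cap U^-$ that at least one of the two constructions is $(m,f)$-free. Your version is slightly more explicit than the paper's on two points -- that $f\notin F(S,m)$ gives $f\notin U^+$ \emph{or} $f\notin U^-$ rather than both, and why the ranges $x\le m-1$ and $x\ge 1$ suffice given $0<f<\binom m3$ -- but these are clarifications, not deviations.
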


\begin{proof}
Assume we have integers $m, f$ as above, some $S \subseteq [2]$ and $f \not\in F(S,m)$.
Let $c$ be a constant, $0 < c < 1/10$,  $n \ge n_0$, and  $e$ be any integer satisfying  $c\binom n3 \leq  e \le (1-c) \binom n3$. 
Define graphs $G_1=G_1(n, e)$ and $G_2=G_2(n, e)$ whose existence is guaranteed by Lemma \ref{lem:adding_clique}. 
Any induced subgraph of $G_1$ on $m$ vertices is canonical plus with parameters $(S, m, x)$ for some $x$ and thus, its number of edges is in $\bigcup\limits_{x=0}^{m-1} [f(S,m,x), f(S,m,x) + m]$.
Any induced subgraph of $G_2$ on $m$ vertices is canonical minus with parameters $(S, m, x)$ for some $x$ and thus,  its number of edges is in $ \bigcup\limits_{x=1}^m [f(S,m,x)- m, f(S,m,x)]$.
Since $f\not\in F(S,m)$, we get that $G_1$ and $G_2$ are $(m,f)$-free.
Letting $c$ go to zero, we see that  $\sigma_3(m,f) = 0$.
\end{proof}
\vskip 0.5cm

In the following lemmas we shall use the set $S=\emptyset$, $S=\{1\}$, or $S=\{2\}$, to claim that for many pairs $(m,f)$, $\sigma_3(m,f)=0$.

\begin{lemma}\label{cor:small_f}
Let $m\geq 7$ and $0 < f < \binom{m-1}2$. Then $\sigma_3(m,f) = 0$. 
\end{lemma}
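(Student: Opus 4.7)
The plan is to invoke Lemma~\ref{F} with $S=\{1\}$. Writing $f(\{1\},m,x)=\binom{x}{3}+x\binom{m-x}{2}$, we have the endpoint values $f(\{1\},m,0)=0$ and $f(\{1\},m,1)=\binom{m-1}{2}$, which is precisely the threshold appearing in the lemma. It therefore suffices to show that for every integer $f$ with $0<f<\binom{m-1}{2}$ and every $m\geq 7$, the value $f$ lies outside $F(\{1\},m)$, i.e., either outside the plus-union or outside the minus-union.

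The main intermediate step will be the sub-claim
\[
(\star)\qquad f(\{1\},m,x)\geq \binom{m-1}{2}\quad\text{for every }x\in\{1,2,\ldots,m\}\text{ whenever }m\geq 7.
\]
Granting $(\star)$, I split into two cases. If $m<f<\binom{m-1}{2}$, then $f>m$ excludes $f$ from the $x=0$ plus-interval $[0,m]$, and $f<\binom{m-1}{2}\leq f(\{1\},m,x)$ excludes $f$ from every plus-interval with $x\geq 1$, so $f$ misses the plus-union. If $0<f\leq m$, then since $m^2-7m+2>0$ for $m\geq 7$ we have $\binom{m-1}{2}>2m$; combined with $(\star)$, this yields $f(\{1\},m,x)-m\geq \binom{m-1}{2}-m>m\geq f$ for every $x\in[1,m]$, so $f$ lies strictly below every minus-interval. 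In either case $f\notin F(\{1\},m)$, and Lemma~\ref{F} delivers $\sigma_3(m,f)=0$.

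To establish $(\star)$, I examine the cubic
\[
L(x):=6\left[f(\{1\},m,x)-\binom{m-1}{2}\right]=x(x-1)(x-2)+3x(m-x)(m-x-1)-3(m-1)(m-2).
\]
Since $L(1)=0$, dividing out the factor $(x-1)$ gives $L(x)=(x-1)\,Q(x)$ where $Q(x)=4x^2+(4-6m)x+3(m-1)(m-2)$. The discriminant of $Q$ equals $-4(3m^2-24m+20)$, which is non-positive for $m\geq 8$; consequently $Q\geq 0$ on all of $\mathbb{R}$ and $L(x)\geq 0$ for every $x\geq 1$ in that range. For the borderline case $m=7$, a direct check of $Q(x)=4x^2-38x+90$ shows its real roots are $x=4.5$ and $x=5$, so $Q\geq 0$ at every integer $x\in[1,7]$ (with equality only at $x=5$, matching $f(\{1\},7,5)=15=\binom{6}{2}$).

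The only genuinely substantive work lies in verifying $(\star)$ through the factorisation $L(x)=(x-1)Q(x)$ and the discriminant bookkeeping; once that is in hand, the remainder of the argument is an elementary comparison of intervals against the plus- and minus-unions for $S=\{1\}$.
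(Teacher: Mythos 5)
Your proposal is correct and follows the same route as the paper: invoke Lemma~\ref{F} with $S=\{1\}$ and show that $[1,\binom{m-1}{2}-1]$ is disjoint from $F(\{1\},m)$ by pinning $f$ against the $x=0$ plus-interval and the $x=1$ minus-interval, using $\binom{m-1}{2}>2m$. The one genuine addition is that you actually prove the monotonicity claim $(\star)$ --- $f(\{1\},m,x)\ge\binom{m-1}{2}$ for integer $x\ge 1$ --- via the factorisation $L(x)=(x-1)Q(x)$ and the discriminant of $Q$, whereas the paper simply asserts this without justification; your algebra for $Q$ and its discriminant is correct, including the separate check at $m=7$ where $Q$ dips negative on $(4.5,5)$ but is nonnegative at every integer.
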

\begin{proof}
Let $S = \{1\}$. By Lemma \ref{F}, it is sufficient to verify that $f \not\in  F(\{1\}, m)$. For that it is sufficient to check that $F(\{1\}, m)\cap [1, \binom {m-1}{2}-1]=\emptyset$.    Recall that $$  F(\{1\},m) =   \bigcup\limits_{x=0}^{m-1} [f(\{1\},m,x), f(\{1\},m,x) + m] \cap \bigcup\limits_{x=1}^m [f(\{1\},m,x)- m, f(\{1\},m,x)].$$ 
Note that $f(\{1\},m,0) = 0$, $f(\{1\},m, 1) =  \binom{m-1}{2} $, and $f(\{1\},m, x) \geq  \binom{m-1}{2}$, for  $x >  1$. 
Thus,  we have 
\begin{align*} F(\{1\},m) \cap [1, \tbinom{m-1}{2}-1] &= \bigcup\limits_{x=0}^{m-1} [f(\{1\},m,x), f(\{1\},m,x) + m]  \cap [1, \tbinom{m-1}{2}-1] \\
	&= [f(\{1\},m,0), f(\{1\},m,0) + m] \cap [1, \tbinom{m-1}{2}-1]=[1,m], \end{align*} 
and 
\begin{align*}\bigcup\limits_{x=1}^m [f(\{1\},m,x)- m, f(\{1\},m,x)] \cap [1, \tbinom{m-1}{2} -1] &=  [f(\{1\},m,1)- m, f(\{1\},m,1) - 1]\\
&=[\tbinom{m-1}{2}-m, \tbinom{m-1}{2}-1].
\end{align*}
In particular, we have
\begin{align*}  F(\{1\},m) \cap [1, \tbinom{m-1}{2} -1]
 = [0, m] \cap [\tbinom{m-1}{2}-m, \tbinom{m-1}{2}-1]  = \emptyset,
\end{align*}
where in the last step we used that $\binom{m-1}{2} > 2m$. Thus,  $\sigma_3(m,f) = 0$. 
\end{proof}

\begin{lemma}\label{cor:s_empty}
Let $f$ be an integer such that $ \binom{m-1}2 \leq f<\binom m3$ and  for any $x \in [m]$,  $f \neq  \binom x3$. Then  $\sigma_3(m,f) = 0$.
\end{lemma}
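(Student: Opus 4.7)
The plan is to apply Lemma~\ref{F} with the choice $S = \emptyset$. Since $f(\emptyset, m, x) = \binom{x}{3}$, the set of representable edge-counts becomes
\[
F(\emptyset, m) = \bigcup_{x=0}^{m-1} \bigl[\binom{x}{3},\; \binom{x}{3} + m\bigr] \;\cap\; \bigcup_{y=1}^{m} \bigl[\binom{y}{3} - m,\; \binom{y}{3}\bigr],
\]
and it suffices to show that any $f$ with $\binom{m-1}{2} \leq f < \binom{m}{3}$ lying in $F(\emptyset, m)$ must be of the form $\binom{x}{3}$ for some $x \in [m]$. The hypothesis would then force $f \notin F(\emptyset, m)$, and Lemma~\ref{F} would give $\sigma_3(m,f) = 0$.

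Suppose $f \in F(\emptyset, m)$, and fix indices $x \in [0, m-1]$ and $y \in [1, m]$ witnessing the two memberships. I would split into three cases depending on the ordering of $x$ and $y$. If $x = y$, then $f = \binom{x}{3}$ immediately. If $x > y$, the constraints $\binom{x}{3} \leq f \leq \binom{y}{3} \leq \binom{x-1}{3}$ force either $x \leq 2$ (so $f = 0$, contradicting $f \geq \binom{m-1}{2}$) or the strict inequality $\binom{x-1}{3} < \binom{x}{3}$, also a contradiction. The remaining case $x < y$ exploits the overlap of the two intervals at $f$, which gives
\[
\binom{x}{2} \;\leq\; \binom{y}{3} - \binom{x}{3} \;\leq\; 2m,
\]
so $x(x-1) \leq 4m$ and hence $x \leq 2\sqrt{m} + 1$. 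Consequently $f \leq \binom{x}{3} + m = O(m^{3/2})$, which is smaller than $\binom{m-1}{2} = \Theta(m^2)$ for $m$ sufficiently large.

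The only concrete inequality to verify is $\binom{\lfloor 2\sqrt{m}+1 \rfloor}{3} + m < \binom{m-1}{2}$ uniformly for $m \geq 16$. At $m = 16$ this reads $\binom{8}{3} + 16 = 72 < 105 = \binom{15}{2}$, and since the right-hand side grows like $m^2/2$ while the left-hand side grows only like $\tfrac{4}{3}m^{3/2}$, once established at the boundary the inequality is preserved for all larger $m$ by a short monotonicity check. I expect this numerical bookkeeping, together with the clean case analysis of the three orderings of $x$ and $y$, to constitute essentially the entire difficulty of the argument; the structural content comes for free from Lemma~\ref{F}, and the reason the hypothesis excludes exactly the values $\binom{x}{3}$ with $x \in [m]$ is precisely the Case~$x=y$ of the trichotomy.
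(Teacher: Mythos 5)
Your argument is correct and takes essentially the same route as the paper: both choose $S=\emptyset$ and use that once $f\ge\binom{m-1}{2}$ the tetrahedral numbers $\binom x3$ are spaced more than $2m$ apart (equivalently $\binom x2>2m$), so the ``up'' and ``down'' intervals forming $F(\emptyset,m)$ can only meet at a common endpoint $\binom x3$, which you phrase contrapositively via a case split on the witnessing indices $x,y$ rather than stating the gap condition directly. A minor slip: at $m=16$ your own bound $x\le\lfloor 2\sqrt m+1\rfloor$ gives $x\le 9$, so the numerical check should read $\binom 93+16=100<105=\binom{15}2$ rather than $\binom 83+16=72$, but the inequality still holds and nothing breaks.
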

\begin{proof}
Define $f$ as given in the statement of the lemma and $S=\emptyset$. By Lemma \ref{F},  it is sufficient to prove that $ f\not\in F(\emptyset, m)$ and in particular it is sufficient to show that  $F(\emptyset,m)\cap [\binom{m-1}2, \binom m3-1]\subseteq \{\binom x3: ~x\in [m]\}$.
Since $f(\emptyset,n,x) = \binom x3$, we have    
$$ F(\emptyset,m) =  \bigcup\limits_{x=0}^{m-1} [\tbinom x3, \tbinom x3 + m] \cap \bigcup\limits_{x=1}^m [\tbinom {x}3 -m, \tbinom{x}{3}],$$
see Figure \ref{figureintervals} for an illustration of the set $F(\emptyset, m)$.
Note that $\binom x3 \ge \binom{m-1}{2}$ implies $\binom{x}{2} > 2m$, which is equivalent to 
$\binom{x+1}{3} - m > \binom{x}{3} + m$. In particular, in this case the interval $[\binom x3, \binom{x+1}3]$ is long enough that we have $[\binom x3, \binom x3 + m] \cap [\binom {x'}3 -m, \binom{x'}3] = \emptyset$ for $x \neq x'$ and $\binom x3, \binom{x'}3 \ge \binom{m-1}2$.
Thus,  $$F(\emptyset,m) \cap [\tbinom{m-1}{2}, \tbinom m3] 
\subseteq \{ \tbinom x3 : x \in [m]\}. \qedhere$$ 
\end{proof}
	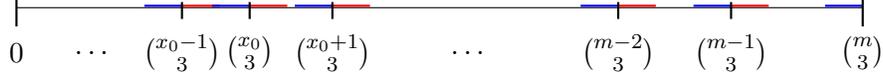
\begin{figure}[H]
	\centering
	\begin{tikzpicture}[scale=.5]
		\draw[] (0,0) -- (22.5,0) ; 
		
		\draw[red, thick] (4.4, 0.05) -- (5.4,0.05);
		\draw[red, thick] (6.2, 0.05) -- (7.2,0.05);
		\draw[red, thick] (8.4, 0.05) -- (9.4,0.05);
		\draw[red, thick] (16, 0.05) -- (17,0.05);
		\draw[red, thick] (19, 0.05) -- (20,0.05);
		
		\draw[blue, thick] (4.4, 0.05) -- (3.4,0.05);
		\draw[blue, thick] (6.2, 0.05) -- (5.2,0.05);
		\draw[blue, thick] (8.4, 0.05) -- (7.4,0.05);
		\draw[blue, thick] (16, 0.05) -- (15,0.05);
		\draw[blue, thick] (19, 0.05) -- (18,0.05);
		\draw[blue, thick] (22.5, 0.05) -- (21.5,0.05);
		
		\node[] at (0,-1.2) {$0$};	
		\node[] at (2,-1.2) {$\ldots$};
		\node[] at (12,-1.2) {$\ldots$};
		\node[] at (8.4,-1.2) {$\binom{x_0+1}{3}$};
		\node[] at (6.2,-1.2) {$\binom{x_0}{3}$};
		\node[] at (4.4,-1.2) {$\binom{x_0-1}{3}$};
		\node[] at (16,-1.2) {$\binom{m-2}{3}$};
		\node[] at (19,-1.2) {$\binom{m-1}{3}$};
		\node[] at (22.5,-1.2) {$\binom{m}{3}$};

		\draw[thick, black] (0,0.3) -- (0,-0.5);
		\draw[thick, black] (4.4,0.15) -- (4.4,-0.3);
		\draw[thick, black] (6.2,0.15) -- (6.2,-0.3);
		\draw[thick, black] (8.4,0.15) -- (8.4,-0.3);
		\draw[thick, black] (16,0.15) -- (16,-0.3);
		\draw[thick, black] (19,0.15) -- (19,-0.3);
		\draw[thick, black] (22.5,0.3) -- (22.5,-0.5);
	
	\end{tikzpicture}
	\caption{This figure displays the set $\bigcup\limits_{x=0}^{m-1} [\tbinom x3, \tbinom x3 + m]$ in red and the set $\bigcup\limits_{x=1}^m [\tbinom {x}3 -m, \tbinom{x}{3}]$ in blue on the number line. Here, $x_0$ is the smallest integer $x$ such that $\binom{x+1}{3} - m > \binom{x}{3} + m$.}
		\label{figureintervals}
\end{figure}

\begin{lemma}\label{cor:s_2}
Let $ m \ge 13$ and $f$ be an integer, such that  $\binom{m-1}2 \le f\le \binom m3 - \binom{m-1}{2}$ and for any   $x \in [m]$,   $ f \neq \binom x3 + \binom x2(m-x)$. Then  $\sigma_3(m,f) = 0$. 
\end{lemma}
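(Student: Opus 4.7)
The plan is to argue in parallel with Lemma~\ref{cor:s_empty}, now applied with $S=\{2\}$. By Lemma~\ref{F} it suffices to prove that $f\notin F(\{2\},m)$ under the given hypotheses. Writing $g(x):=f(\{2\},m,x)=\binom{x}{3}+\binom{x}{2}(m-x)$, the Diophantine assumption says precisely that $f\neq g(x)$ for every $x\in[m]$, so the goal reduces to
\[ F(\{2\},m)\cap \left[\tbinom{m-1}{2},\, \tbinom{m}{3}-\tbinom{m-1}{2}\right] \subseteq \{g(x):x\in[m]\}. \]

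The first step I would carry out is the first-difference identity $g(x+1)-g(x)=x(m-x-1)$, which is a routine computation. This shows that $g$ is non-decreasing on $\{0,1,\ldots,m\}$, strictly increasing on $[1,m-1]$, with the degenerate pairs $g(0)=g(1)=0$ and $g(m-1)=g(m)=\binom{m}{3}$. I would also record the boundary values $g(3)=3m-8$ and, by telescoping the differences, $g(m-3)=\binom{m}{3}-(3m-8)$.

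Now suppose $f\in F(\{2\},m)$ lies in the stated range. Fix indices $x\in\{0,\ldots,m-1\}$ and $x'\in[m]$ with $g(x)\le f\le g(x)+m$ and $g(x')-m\le f\le g(x')$. The lower bound $f\ge \binom{m-1}{2}$ combined with the quadratic inequality $g(3)+m=4m-8<\binom{m-1}{2}$ (valid for $m\ge 10$) forces $x\ge 4$ by monotonicity of $g$; symmetrically, $f\le \binom{m}{3}-\binom{m-1}{2}$ together with $g(m-3)-m>\binom{m}{3}-\binom{m-1}{2}$ forces $x'\le m-4$. Monotonicity on $[1,m-1]$ also forces $x\le x'$. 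If $x<x'$, the difference identity gives
\[ g(x')-g(x)=\sum_{i=x}^{x'-1} i(m-i-1)\ge 4(m-5), \]
since on $[4,m-5]$ the concave map $i\mapsto i(m-i-1)$ attains its minimum at an endpoint. For $m\ge 13$ this lower bound exceeds $2m$, contradicting $g(x')-g(x)\le 2m$, which follows from $|g(x)-f|,|g(x')-f|\le m$. Hence $x=x'$, so $f=g(x)$ with $x\in\{4,\ldots,m-4\}\subseteq[m]$, contradicting the Diophantine hypothesis. Lemma~\ref{F} then yields $\sigma_3(m,f)=0$.

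The main obstacle is the localization step: the function $g$ degenerates near the endpoints $x\in\{0,1,m-1,m\}$, where consecutive intervals $[g(x),g(x)+m]$ and $[g(x+1)-m,g(x+1)]$ overlap substantially (their separation would require $x(m-x-1)>2m$, which fails near the boundary). One must therefore exploit both the lower and the upper range hypothesis on $f$, together with $m\ge 13$, to push both indices $x$ and $x'$ into the middle zone $[4,m-4]$, where the first differences $x(m-x-1)$ comfortably exceed $2m$ and the interval analysis becomes clean.
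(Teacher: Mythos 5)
Your proof is correct and follows essentially the same route as the paper's: reduce to showing $f\notin F(\{2\},m)$ via Lemma~\ref{F}, use the two boundary conditions to push the relevant indices into $[4,m-4]$, and then show the plus- and minus-intervals meet only at the singletons $g(x)$ because consecutive gaps exceed $2m$. You additionally make explicit the first-difference identity $g(x+1)-g(x)=x(m-x-1)$ that the paper leaves as "one can verify," and your version of the gap estimate (taking $i\in[4,m-5]$, giving the bound $4(m-5)>2m$) is in fact slightly sharper than the paper's phrasing, though both are more than sufficient under the hypothesis $m\ge 13$.
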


\begin{proof}
Consider $m$ and $f$ as given in the statement of the lemma and let $S = \{2\}$.  
By Lemma \ref{F},  it is sufficient to prove that $ f\not\in F(S, m)$ and in particular, it is sufficient to show that  $$F(\{2\},m)\cap [\binom{m-1}2, \binom m3 - \binom{m-1}{2}] \subseteq \{ \binom x3 + \binom x2(m-x): ~x\in [m]\}.$$ Recall that 
$$F(\{2\},m) = \bigcup\limits_{x=0}^{m-1} [f(\{2\},m,x), f(\{2\},m,x) + m] \cap \bigcup\limits_{x=1}^m [f(\{2\},m,x)- m, f(\{2\},m,x)].  $$
From the definition of $f$, we have that $f(\{2\},m,x)= \binom x3 + \binom x2(m-x)$.
Note that for $x<4$ we have $f(\{2\}, m, x) +m < \binom{m-1}{2}$ and 
for $x>m-4$, $f(\{2\}, m, x) -m > \binom m3 - \binom{m-1}2$.
Therefore it is sufficient to consider only 
$$\bigcup\limits_{x=4}^{m-4} [f(\{2\},m,x), f(\{2\},m,x) + m] \cap \bigcup\limits_{x=4}^{m-4} [f(\{2\},m,x)- m, f(\{2\},m,x)].$$  
 One can verify, that for $m \ge 13$  and  $4 \le x \le m-4$, 
$ f(\{2\},m,x) - f(\{2\},m, x-1)>2m$.
Thus,
\begin{eqnarray*}
 \bigcup\limits_{x=4}^{m-4} [f(\{2\},m,x), f(\{2\},m,x) + m] \cap \bigcup\limits_{x=4}^{m-4} [f(\{2\},m,x)- m, f(\{2\},m,x)] \\
 = \{ f(\{2\}, m, x) : 4 \le x \le m-4\}. 
\end{eqnarray*}

 In particular, we have 
$$F(\{2\},m) \cap [\tbinom{m-1}2, \tbinom{m}{3} - \tbinom{m-1}2] \subseteq \{\tbinom x3 + \tbinom x2(m-x) : 4 \le x \le m-4\}.$$
\end{proof}

\subsection{Proof of Theorem \ref{Diophantine}}
\begin{proof}
For $m\le 15$ it was already shown in \cite{W22}, that the only possible pair $(m,f)$ with $0<f<\binom{m}{3}$ and  $\sigma_3(m,f)> 0$ is $(6,10)$, where $10 = \binom{5}{3} = \binom{6}{3} - \binom 53 = \binom 33 + \binom32(6-3)$.
Now let $m > 15$, and assume that for some $f$ we have $\sigma_3(m,f) > 0$.
Then applying Lemma \ref{cor:small_f} to $(m,f)$ and $(m, \tbinom{m}3 - f)$, we obtain that $\binom {m-1}{2} \le f \le \binom m3 - \binom {m-1}2$. 
Applying Lemma \ref{cor:s_empty} to $(m,f)$ gives us  that $f=\binom{x_1}{3}$, for some $x_1$;  applying it again to $(m, \tbinom{m}3 - f)$ gives us that  $f = \binom m3 - \binom{x_2}{3}$, for some $x_2$. Lemma~\ref{cor:s_2} shows the existence of some $x_3$, for which we have $f=\binom{x_3}3 + \binom{x_3}2(m-x_3).$ This completes the proof.
\end{proof}

\section{Concluding Remarks}
\label{sec:conclu}
In this paper we investigate $3$-uniform hypergraphs and forcing densities $\sigma_3(m,f)$. We show that $\sigma_3(6,10)>0$ and provided more specific bounds.  Apart from the pairs $(m,0)$, $(m, \binom{m}{3})$, the pair $(6,10)$  is the only known non-trivial pair for which the forcing density is positive. 
We conjecture that $(6,10)$ is the unique pair $(m, f)$ with $0<f<\binom{m}{3}$ for which $\sigma_3(m,f)>0$.\\

Theorem~\ref{Diophantine} implies that if there is no $m\neq 6$ for which there is a solution $(x_1, x_2, x_3)$, $x_i\in [m-1]$, of the system of Diophantine equations
\begin{equation}\label{dioph}
\binom{x_1}{3} = \binom{m}{3}- \binom{x_2}{3} = \binom{x_3}{3} + \binom{x_3}{2}(m-x_3),
\end{equation}  then Conjecture~\ref{conj610} is true. However, we do not know much about  solutions $(x_1, x_2, x_3)$ to the above system of equations. A computer search for suitable solutions of  (\ref{dioph}) for any given  $m\leq 10^6$ did not give a result. Considering only the equation  $\binom{x_1}{3} = \binom{m}{3}- \binom{x_2}{3} $, Sierpi\'nski \cite{S62} found an infinite class of solutions. \\

It might be possible to find stronger necessary conditions for a pair to have positive forcing density using different constructions than the ones used in the proof of Theorem~\ref{Diophantine}. In particular, the reader might wonder why Lemma~\ref{F} and the corresponding constructions in Lemma~\ref{lem:adding_clique} were not used when $S=\{1\}$. The reason for this is that the respective  function $f(\{1\},m,x)=\binom{x}{3}+x\binom{m-x}{2}$ is not monotone, making it difficult to capture the structure of the set $F(\{1\},m)$. However, this construction could very well be used to conclude that certain pairs $(m,f)$ have forcing density zero. \\

Determining the exact value of $\sigma_3(6,10)$ remains open. We believe that the upper bound from Theorem~\ref{610theo}, coming from the iterated construction $H_n^{\mathrm{it}}$ in Lemma~\ref{construction},  is tight.
\begin{conj}
\label{conj610exact}
We have $\sigma_3(6,10)=1-2\frac{12}{9+21\sqrt{3}}\approx 0.47105$.
\end{conj}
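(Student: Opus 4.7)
The plan is to leverage Theorem~\ref{610theo} and reduce the conjecture to proving the matching upper bound
\[
\pi({}_{\mathrm{ind}}\mathcal{F}_6^{10},\{K_4^{3-}\}) \;\le\; \frac{4}{3+7\sqrt{3}},
\]
since Lemma~\ref{construction} already supplies the lower bound. My first attempt would be via flag algebras. Vaughan's computation~\cite{flagmatic} already yields $\pi(K_4^{3-})\le 0.28689$ for the less constrained problem, and adding induced $(6,10)$-freeness to the list of forbidden configurations can only tighten the corresponding semidefinite program. The graph $H_n^{\mathrm{it}}$ is a natural extremal candidate, so one would hope that the SDP is exactly tight at $\frac{4}{3+7\sqrt{3}}$ and that the numerical dual certificate can be rounded to a rational identity, as has become standard practice in applications of the method to Tur\'an problems.

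In parallel, a structural route would aim to show that every near-extremal $K_4^{3-}$-free and $(6,10)$-free $3$-graph is close to an iterated weak blow-up of the template $H$ from Lemma~\ref{construction}. Concretely, the plan is threefold. First, prove a stability statement: a sufficiently dense $K_4^{3-}$-free and $(6,10)$-free $3$-graph $G$ on $n$ vertices admits a vertex partition into six classes whose cross-triples are, up to $o(n^3)$ edge modifications, determined by some $6$-vertex template $H'$. Second, via a Zykov-type symmetrization that merges vertices with identical link behaviour, upgrade the approximate structure to an exact weak blow-up. Third, enumerate the $6$-vertex templates $H'$ that are $K_4^{3-}$-free and whose weak blow-ups are $(6,10)$-free, and verify, via a Lagrange-type optimization in the part sizes, that $H$ together with the sizes $\lceil n/(3\sqrt{3})\rceil$ and $\lceil n(\tfrac{1}{3}-\tfrac{1}{3\sqrt{3}})\rceil$ of Lemma~\ref{construction} uniquely maximizes the density across all admissible templates. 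Iterating inside the parts then yields the limiting value $\frac{4}{3+7\sqrt{3}}$.

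The main obstacle is turning the approximate structure into a theorem that \emph{forces} the template to be exactly $H$. The induced $(6,10)$-free condition is what brings the conjectured value $\frac{4}{3+7\sqrt{3}}$ strictly below the Frankl--F\"uredi density $2/7$, so the argument must exploit $(6,10)$-freeness in a fine way; in particular, it has to rule out weak blow-ups of the full Frankl--F\"uredi $5$-regular $3$-graph, which spans exactly $10$ edges on its $6$ vertices and is therefore \emph{not} $(6,10)$-free at a canonical transversal. Since $\pi(K_4^{3-})$ itself remains open despite substantial effort, a purely combinatorial proof seems out of reach, and the most plausible route combines flag algebras with a stability argument. A useful intermediate result, already interesting on its own, would be to verify Conjecture~\ref{conj610exact} under the extra assumption that the extremal graph is an iterated weak blow-up of a bounded template; this reduces the problem to the finite optimization in the third step, which is tractable and should isolate $H$ together with the sizes from Lemma~\ref{construction}.
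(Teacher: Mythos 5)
This statement is Conjecture~\ref{conj610exact}, which the paper explicitly leaves \emph{open}; there is no proof in the paper to compare your proposal against, so the right question is whether your proposal is itself a proof. It is not. Your reduction is correct: by Theorem~\ref{610theo} together with the lower bound from Lemma~\ref{construction}, the conjecture is equivalent to $\pi({}_{\mathrm{ind}}\mathcal{F}_6^{10},\{K_4^{3-}\}) \le \frac{4}{3+7\sqrt{3}}$. But beyond that reduction you offer a research program, not an argument, and you candidly say as much at the end.

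The most concrete gap is in your first route: you ``hope that the SDP is exactly tight at $\frac{4}{3+7\sqrt{3}}$,'' but the concluding remarks of the paper already report that a standard flag algebra computation gives only $\pi({}_{\mathrm{ind}}\mathcal{F}_6^{10},\{K_4^{3-}\}) \le 0.275$, strictly above the conjectured $\frac{4}{3+7\sqrt{3}} \approx 0.26447$. So the plain flag algebra bound is \emph{not} tight, and you would need a higher-order relaxation, or a combination of flag algebras with a structural reduction, to have any chance. The structural route has the deeper issue you partially identify: the extremal construction is an \emph{iterated} blow-up, which means the extremal graph is not a bounded blow-up at all, so Zykov-type symmetrization and standard stability arguments do not apply off the shelf; one would need a ``vertex-extendability''/``local-stability'' framework adapted to iterated constructions, and that is precisely the kind of machinery whose development would constitute the real content of a proof. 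Your third step (finite optimization over $6$-vertex templates with weighted part sizes, then iterate) is plausible as a conditional result but only kicks in after the hard part is done. In short: the plan is reasonable and consistent with how the authors themselves seem to view the problem, but none of the steps are carried out, and the one step with available evidence (flag algebras at the standard level) is known not to close the gap.
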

We remark that a standard flag algebra calculation yields that $\pi({}_{\mathrm{ind}}\mathcal{F}_{6}^{10},\{K_{4}^{3-}\})\leq 0.275<2/7$. Using the first part of  Theorem~\ref{610theo}, 
this gives $\sigma_3(6,10)\geq 0.45$ which improves the lower bound on  $\sigma_3(6,10)$ given in the second part of Theorem~\ref{610theo}.

\section*{Acknowledgements} We thank Christian Winter and Kevin Ford for discussions on parts of the project. Further, we thank Bernard Lidick\'y for computational assistance.

\begin{bibdiv} 
\begin{biblist} 
	
	\bib{AW}{article}{
		Author = {Axenovich, Maria}, 
		author={Weber, Lea},
		Title = {Absolutely avoidable order-size pairs for induced subgraphs},
		year={2021},
		eprint={arxiv.org/abs/2106.14908},
		archivePrefix={arXiv},
	}

\bib{dC}{article}{
	AUTHOR = {de Caen, D.},
	TITLE = {Extension of a theorem of {M}oon and {M}oser on complete
		subgraphs},
	JOURNAL = {Ars Combin.},
	FJOURNAL = {Ars Combinatoria},
	VOLUME = {16},
	YEAR = {1983},
	PAGES = {5--10},
	ISSN = {0381-7032},
	MRCLASS = {05C30},
	MRNUMBER = {734038},
	MRREVIEWER = {E. M. Palmer},
}

\bib{MR183654}{article}{
	AUTHOR = {Erd\H{o}s, P.},
	TITLE = {On extremal problems of graphs and generalized graphs},
	JOURNAL = {Israel J. Math.},
	FJOURNAL = {Israel Journal of Mathematics},
	VOLUME = {2},
	YEAR = {1964},
	PAGES = {183--190},
	ISSN = {0021-2172},
	MRCLASS = {05.40},
	MRNUMBER = {183654},
	MRREVIEWER = {A. H. Stone},
	URL = {https://doi.org/10.1007/BF02759942},
}

\bib{EFRS}{article}{
	AUTHOR = {Erd\H{o}s, Paul}, 
	author={F\"{u}redi, Zolt\'{a}n}, 
	author ={Rothschild, Bruce L.}, 
	author={S\'{o}s, Vera T.},
	TITLE = {Induced subgraphs of given sizes},
	NOTE = {Paul Erd\H{o}s memorial collection},
	JOURNAL = {Discrete Math.},
	FJOURNAL = {Discrete Mathematics},
	VOLUME = {200},
	YEAR = {1999},
	NUMBER = {1-3},
	PAGES = {61--77},
	ISSN = {0012-365X},
	MRCLASS = {05C35},
	MRNUMBER = {1692280},
	MRREVIEWER = {W. G. Brown},
	URL = {https://doi.org/10.1016/0012-365X(84)90058-X}
}

\bib{EHR}{article}{
	AUTHOR = {Erd\H{o}s, P.},
	author={Hajnal, A.},
	author={Rado, R.},
	TITLE = {Partition relations for cardinal numbers},
	JOURNAL = {Acta Math. Acad. Sci. Hungar.},
	FJOURNAL = {Acta Mathematica. Academiae Scientiarum Hungaricae},
	VOLUME = {16},
	YEAR = {1965},
	PAGES = {93--196},
	ISSN = {0001-5954},
	MRCLASS = {04.60},
	MRNUMBER = {202613},
	MRREVIEWER = {L. Gillman},
	URL = {https://doi.org/10.1007/BF01886396},
}

\bib{MR726456}{article}{
	AUTHOR = {Erd\H{o}s, Paul}, 
	author={Simonovits, Mikl\'{o}s},
	TITLE = {Supersaturated graphs and hypergraphs},
	JOURNAL = {Combinatorica},
	FJOURNAL = {Combinatorica. An International Journal of the J\'{a}nos Bolyai
		Mathematical Society},
	VOLUME = {3},
	YEAR = {1983},
	NUMBER = {2},
	PAGES = {181--192},
	ISSN = {0209-9683},
	MRCLASS = {05C55 (05C65)},
	MRNUMBER = {726456},
	MRREVIEWER = {E. Rodney Canfield},
	URL = {https://doi.org/10.1007/BF02579292},
}

\bib{FS08}{article}{
	title={Unavoidable patterns},
	author={Fox, Jacob}, 
	author={Sudakov, Benny},
	journal={Journal of Combinatorial Theory, Series A},
	volume={115},
	number={8},
	pages={1561--1569},
	year={2008},
	publisher={Elsevier}
}

\bib{FF}{article}{,
		AUTHOR = {Frankl, P.},
		author={F\"{u}redi, Z.},
		TITLE = {An exact result for {$3$}-graphs},
		JOURNAL = {Discrete Math.},
		FJOURNAL = {Discrete Mathematics},
		VOLUME = {50},
		YEAR = {1984},
		NUMBER = {2-3},
		PAGES = {323--328},
		ISSN = {0012-365X},
		MRCLASS = {05C35 (05C65)},
		MRNUMBER = {753720},
		MRREVIEWER = {Ralph Faudree},
		URL = {https://doi.org/10.1016/0012-365X(84)90058-X},
}

	\bib{HMZ}{article}{
		Author = {He, Jialin},
		author={Ma, Jie}, 
		author={Zhao, Lilu},
		Title = {Improvements on induced subgraphs of given sizes},
		year={2021},
		eprint={arxiv.org/abs/2101.03898},
		archivePrefix={arXiv},
	}

\bib{keevash2011hypergraph}{incollection}{
	AUTHOR = {Keevash, Peter},
	TITLE = {Hypergraph {T}ur\'{a}n problems},
	BOOKTITLE = {Surveys in combinatorics 2011},
	SERIES = {London Math. Soc. Lecture Note Ser.},
	VOLUME = {392},
	PAGES = {83--139},
	PUBLISHER = {Cambridge Univ. Press, Cambridge},
	YEAR = {2011},
	MRCLASS = {05-02 (05C65)},
	MRNUMBER = {2866732}
}

\bib{flagmatic2}{misc}{
	author = {Lidick\'y, Bernard},
	title= {Flagmatic webpage},
	note={{\tt \url{http://lidicky.name/flagmatic/}}}
}

\bib{S81}{article}{
	title={Systems of sets that have the T-property},
	author={Sidorenko, A. F.},
	journal={Moscow University Mathematics Bulletin},
	number={36},
	pages={22--26},
	year={1981}
}

\bib{S62}{article}{
	title={Sur une propri{\'e}t{\'e} des nombres t{\'e}tra{\'e}draux},
	author={Sierpi\'nski, W.},
	journal={Elemente der Mathematik},
	volume={17},
	pages={29--30},
	year={1962}
}

\bib{flagmatic}{misc}{
	author = {Vaughan, Emil},
	title= {Flagmatic software package},
	note={{\tt \url{http://jakubsliacan.eu/flagmatic/}}}
}

\bib{W22}{article}{
	author={Weber, Lea},
	title={Avoidable order-size pairs in hypergraphs},
	year={2022},
	eprint={arxiv.org/abs/2205.15197}
}

\end{biblist} 
\end{bibdiv} 

\section*{Appendix}
In this appendix, we prove Lemma~\ref{construction}.
\begin{proof}[Proof of Lemma~\ref{construction}]
We have 
\begin{align*}
    |E(H_n)|= 3 \left(\frac{n}{3\sqrt{3}}\right)^2\left(\frac{1}{3}-\frac{1}{3\sqrt{3}}\right)n+6 \left(\frac{n}{3\sqrt{3}}\right)\left(\frac{1}{3}-\frac{1}{3\sqrt{3}}\right)^2n^2+o(n^3) = \frac{2\sqrt{3}}{81}n^3+o(n^3).
\end{align*}
Since $H_n^{\mathrm{it}}$ is an $n$-vertex $3$-graph, it has at most $\binom{n}{3} \leq n^3/6$ edges. Let  $|E(H_n^{\mathrm{it}})|=dn^3+o(n^3)$ for some $d\in [0,\frac{1}{6}]$. We have 
\begin{align*}
    |E(H^{\mathrm{it}}_n)|&= \frac{2\sqrt{3}}{81} n^3 + 3d \left(\frac{n}{3\sqrt{3}}\right)^3+ 3d\left(\frac{1}{3}-\frac{1}{3\sqrt{3}}\right)^3n^3 +o(n^3)\\&=\left(\frac{2\sqrt{3}}{81} +\frac{d}{9}(2-\sqrt{3}) \right) n^3+o(n^3).
\end{align*}
Comparing the two expressions for $|E(H^{\mathrm{it}}_n)|$, we get  $d=2/(9+21\sqrt{3})$. In particular, 
\begin{align*}
    \frac{|E(H_n^{\mathrm{it}})|}{\binom{n}{3}}=\frac{4}{3+7\sqrt{3}}+o(1)\approx 0.26447 +o(1).
\end{align*}

Next we show that every  set of six vertices in $H_n^{\mathrm{it}}$ spans at most $9$ edges. Recall that $H_n^{\mathrm{it}}$ is obtained as an iterated blow-up construction with a  \textquotedblleft seed" graph $H$, where  $H$ is the $3$-graph with vertex set $[6]$ and edges  $123,$ $124,$ $345,$ $346,$ $561,$ $562,$ $135,$ $146,$ and $236$. At the first iteration, the vertices $1, \ldots, 6$ or $H$ correspond to parts $A_1, \ldots, A_6$.   We have that $H$ has three vertices of degree $4$ and three vertices of degree $5$, and $H$ is $K_4^{3-}$-free, so every subset of four vertices spans at most two edges. Moreover, the link graph of any vertex of $H$ is a subgraph of a $5$-cycle. Here, the link graph of a vertex $x$ is a 2-graph which has contains an edge $yz$ if and only if $xyz$ is an edge of $H$.\\

Let $X$ be an arbitrary set of six vertices of $H_n^{\mathrm{it}}$.\\

\textbf{Case 1:} $X$ contains vertices from six distinct parts $A_{1}, \ldots A_6$.\\ Then $|X|$ induces a copy of $H$, i.e., exactly $9$ edges. \\

\textbf{Case 2:} $X$ contains vertices from five distinct parts, say $A_{i_1}, A_{i_2}, A_{i_3}, A_{i_4}, $ and $ A_{i_5}$.\\
Assume we have two vertices in $A_{i_1}$, and one vertex in each of $A_{i_2}, A_{i_3}, A_{i_4}, $ and $A_{i_5}$.  Note that  $A_{i_1}, A_{i_2}, A_{i_3}, A_{i_4}, $ and $ A_{i_5}$   correspond to the vertices $i_1, \ldots, i_5 \in V(H)$.   Let $H' = H[\{i_1, i_2, i_3, i_4, i_5\}]$. Since the link graph of any vertex in $H$ is a subgraph of $C_5$, the link graph of any vertex in  $H'$ has at most three edges, so the maximum degree of $H’$ is at most three. This implies that the total number of edges in $H’$ is at most $3\cdot 5/3= 5$.
Since the subgraph of $H_n^{\mathrm{it}}$ induced by $X$ corresponds to $H'$ with an added copy of $i_1$ which contributes at most three edges, $X$ induces at most $5+3=8$   edges. \\


\textbf{Case 3:} $X$ contains vertices from four distinct parts:  $A_{i_1}, A_{i_2}, A_{i_3}, $ and $A_{i_4}$.\\
\textbf{Case 3.1:} $X$ contains $3$ vertices from $A_{i_1}$ and one vertex from each of $A_{i_2}, A_{i_3}, A_{i_4}$. Then $H[\{i_1, i_2, i_3, i_4\}]$ contains at most two edges, so $X$  induces at most $2\cdot 3$ edges between the parts and at most one additional edge inside $A_{i_1}$, so in total at most $7$ edges. \\
\textbf{Case 3.2:} $X$ contains two vertices from each of $A_{i_1}, A_{i_2}$ and one vertex in each of $A_{i_3}, A_{i_4}$. Again, since $H[\{i_1, i_2, i_3, i_4\}]$ contains at most two edges, $X$ induces at most $2\cdot 4 = 8$ edges.\\

\textbf{Case 4:} $X$ contains vertices from three distinct parts: $A_{i_1}, A_{i_2}$, and $ A_{i_3}$.\\ If we have two vertices in each of the three parts, they induce at most $2\cdot2\cdot2 = 8$ edges. If there are exactly three vertices in one of the parts, then there is a part with two vertices and a part with one vertex,  i.e., there are at most $3\cdot2\cdot1 = 6$ edges between the parts, and at most one additional edge inside the first part, giving at most $7$ edges. If there are four vertices in one part, then there are at most $4\cdot1\cdot1$ edges between the parts, and at most $\binom 43 = 4$ additional edges inside the first part, i.e., at most $8$ edges in total.\\

\textbf{Case 5:} $X$ contains vertices from only one or two distinct parts.\\ Then there are no edges between these parts, and all possible edges induced by the six vertices are inside the $A_i$'s. Since the construction is iterative, we can use the previous cases to conclude that $X$ induces at most $9$ edges. 
\end{proof}

\end{document}